\let\reftagform@=\tagform@
\def\tagform@#1{\maketag@@@{(\ignorespaces\textcolor{blue}{#1}\unskip\@@italiccorr)}}
\renewcommand{\eqref}[1]{\textup{\reftagform@{\ref{#1}}}}
\newtheorem{theorem}{Theorem}
\theoremstyle{plain}
\newtheorem{corollary}{Corollary}
\newtheorem{lemma}{Lemma}
\newtheorem{proposition}{Proposition}
\newtheorem{remark}{Remark}
\numberwithin{equation}{section}
 \DeclareMathOperator{\spe}{sp}
\def\etal{et al.\,}
\begin{document}
\title[Operator   Popoviciu's inequality]
{Operator Popoviciu's inequality for superquadratic and convex functions of selfadjoint operators in Hilbert spaces}
\author[M.W. Alomari]{M.W. Alomari}

\address{Department of Mathematics, Faculty of Science and Information
	Technology, Irbid National University, 2600 Irbid 21110, Jordan.}
\email{mwomath@gmail.com}

\date{\today}
\subjclass[2010]{ 47A63}

\keywords{Supequadratic function, Convex function, Selfadjoint
operators, Popoviciu inequality, Hilbert space}
\begin{abstract}
In this work,   operator version of Popoviciu's inequality for
positive selfadjoint operators in Hilbert spaces under positive
linear maps for superquadratic  functions is proved. Analogously, using the same technique   operator version of Popoviciu's inequality   for convex functions is
obtained. Some other related inequalities are also deduced.
\end{abstract}

\maketitle
%/=\=/=\=/=\=/=\=/=\=/=\=/=\=/=\=/=\=/=\=/=\=/=\=/=\=/=\=/=\=/=\=/=\=/=\=/=\=/=\=/=\=/=\=/=\=/=\=/=\=/=\=/=\=
%/=\=/=\=/=\=/=\=/=\=/=\=/=\=/=\=/=\=/=\=/=\=/=\=/=\=/=\=/=\=/=\=/=\=/=\=/=\=/=\=/=\=/=\=/=\=/=\=/=\=/=\=/=\=
\section{Introduction}
%/=\=/=\=/=\=/=\=/=\=/=\=/=\=/=\=/=\=/=\=/=\=/=\=/=\=/=\=/=\=/=\=/=\=/=\=/=\=/=\=/=\=/=\=/=\=/=\=/=\=/=\=/=\=
%/=\=/=\=/=\=/=\=/=\=/=\=/=\=/=\=/=\=/=\=/=\=/=\=/=\=/=\=/=\=/=\=/=\=/=\=/=\=/=\=/=\=/=\=/=\=/=\=/=\=/=\=/=\=

Let $\mathcal{B}\left( \mathcal{H}\right) $ be the Banach algebra
of all bounded linear operators defined on a complex Hilbert space
$\left( \mathcal{H};\left\langle \cdot ,\cdot \right\rangle
\right)$  with the identity operator  $1_{\mathcal{H}}$ in
$\mathcal{B}\left(\mathcal{H} \right)$. Denotes
$\mathcal{B}^+\left( \mathcal{H}\right) $ the convex cone of all
positive operators on $\mathcal{H}$. A bounded linear operator $A$
defined on $\mathcal{H}$ is selfadjoint if and only if $
\left\langle {Ax,x} \right\rangle \in \mathbb{R}$ for all $x\in
\mathcal{H}$. For two selfadjoint operators  $A,B\in \mathcal{H}$,
we write $A\le B$ if $  \left\langle {Ax,x} \right\rangle \le
\left\langle {Bx,x} \right\rangle$ for all $x\in \mathcal{H}$.
Also, we define
\begin{align*}
\left\| A \right\| = \mathop {\sup }\limits_{\left\| x \right\| =
    1} \left| {\left\langle {Ax,x} \right\rangle } \right| = \mathop
{\sup }\limits_{\left\| x \right\| = \left\| y \right\| = 1}
\left| {\left\langle {Ax,y} \right\rangle } \right|.
\end{align*}

If $\varphi$ is any function defined on $\mathbb{R}$, we define
\begin{align*}
\left\| \varphi  \right\|_A  = \sup \left\{ {\left| {\varphi
        \left( \lambda  \right)} \right|:\lambda  \in \spe\left( A
    \right)} \right\}.
\end{align*}
If $\varphi$  is continuous then we write $\left\| \varphi
\right\|_A=\left\| A \right\|  $.

Let $A\in \mathcal{B}\left(\mathcal{H} \right) $ be a selfadjoint
linear operator on $\left( \mathcal{H};\left\langle \cdot ,\cdot
\right\rangle \right)$. Let $C\left(\spe\left(A\right)\right)$ be
the set of all continuous functions defined on the spectrum of $A$
$\left(\spe\left(A\right)\right)$ and let $C^*\left(A\right)$ be
the $C^*$-algebra generated by $A$ and the identity operator
$1_{\mathcal{H}}$.

Let us define the map $\mathcal{G}:
C\left(\spe\left(A\right)\right) \to C^*\left(A\right)$ with the
following properties (\cite{FMPS}, p.3):
\begin{enumerate}
    \item $\mathcal{G}\left(\alpha f + \beta g\right) = \alpha
    \mathcal{G}\left(f\right)+\beta \mathcal{G}\left(g\right)$, for
    all scalars $\alpha, \beta$.

    \item $\mathcal{G}\left(fg\right) = \mathcal{G}\left(f\right)
    \mathcal{G}\left(g\right)$ and
    $\mathcal{G}\left(\overline{f}\right)=\mathcal{G}\left(f\right)^*$;
    where $\overline{f}$ denotes to the conjugate of $f$ and
    $\mathcal{G}\left(f\right)^*$ denotes to the Hermitian of
    $\mathcal{G}\left(f\right)$.

    \item $\left\|\mathcal{G}\left(f\right)\right\|=\left\|f \right\|
    = \mathop {\sup }\limits_{t \in \spe\left(A\right)} \left|
    {f\left( t \right)} \right| $.

    \item $\mathcal{G}\left( {f_0 } \right) = 1_H$ and
    $\mathcal{G}\left( {f_1 } \right) = A$, where
    $f_0\left(t\right)=1$ and $f_1\left(t\right)=t$ for all $t \in
    \spe\left(A\right)$.
\end{enumerate}
Accordingly,  we define the continuous functional calculus for a
selfadjoint operator $A$ by
\begin{align*}
f\left(A\right) = \mathcal{G}\left(f\right)  \text{for all} \,f\in
C\left(\spe\left(A\right)\right).
\end{align*}
If both $f$ and $g$ are real valued functions on $\spe(A)$ then
the following important property holds:
\begin{align}
f\left( t \right) \ge g\left( t \right)  \,\text{for all} \, \,t
\in \spe\left( A \right) \,\,\text{implies}\,\, f\left( A \right)
\ge g\left( A \right), \label{eq1.1}
\end{align}
in the operator order of $\mathcal{B}\left(\mathcal{K} \right)$.

 A linear map  is defined to be   $\Phi:\mathcal{B}\left(\mathcal{H} \right)\to \mathcal{B}\left(\mathcal{K} \right)$ which preserves additivity and
homogeneity, i.e.,  $\Phi \left(\lambda_1 A +\lambda_2 B \right)=
\lambda_1\Phi \left( A  \right)+ \lambda_2\Phi \left( B \right)$
 for any $\lambda_1,\lambda_2 \in \mathbb{C}$  and $A, B \in \mathcal{B}\left(\mathcal{H} \right)$. The linear map is positive $\Phi:\mathcal{B}\left(\mathcal{H} \right)\to \mathcal{B}\left(\mathcal{K} \right)$ if it preserves the operator order, i.e., if $A\in \mathcal{B}^+\left(\mathcal{H} \right)$ then $\Phi\left(A\right)\in \mathcal{B}^+\left(\mathcal{K} \right)$, and in this case we write
 $\textfrak{B} [\mathcal{B}\left(\mathcal{H} \right),\mathcal{B}\left(\mathcal{K} \right)] $. Obviously, a positive linear map $\Phi$ preserves the order relation, namely
 $A\le B \Longrightarrow \Phi\left(A\right)\le \Phi\left(B\right)$ and preserves the adjoint operation $\Phi\left(A^*\right)=\Phi\left(A\right)^*$.
 Moreover, $\Phi$ is said to be  normalized (unital) if it preserves the identity operator, i.e. $\Phi\left(1_{\mathcal{H}}\right) = 1_{\mathcal{K}}$, in this case we write
$\textfrak{B}_n [\mathcal{B}\left(\mathcal{H}
\right),\mathcal{B}\left(\mathcal{K} \right)] $.

%=======================================================================================================
\subsection{Superquadratic functions}
%=======================================================================================================
A function $f:J\to \mathbb{R}$ is called convex iff
\begin{align}
f\left( {t\alpha +\left(1-t\right)\beta} \right)\le tf\left(
{\alpha} \right)+ \left(1-t\right) f\left( {\beta}
\right),\label{eq1.2}
\end{align}
for all points $\alpha,\beta \in J$ and all $t\in [0,1]$. If $-f$
is convex then we say that $f$ is concave. Moreover, if $f$ is
both convex and concave, then $f$ is said to be affine.

Geometrically, for two point $\left(x,f\left(x\right)\right)$ and
$\left(y,f\left(y\right)\right)$  on the graph of $f$ are on or
below the chord joining the endpoints  for all $x,y?\in I$, $x <
y$. In symbols, we write
\begin{align*}
f\left(t\right)\le   \frac{f\left( y \right)  - f\left( x \right)
}{y-x}   \left( {t-x} \right)+ f\left( x \right)
\end{align*}
for any $x \le t \le y$ and $x,y\in J$.

Equivalently, given a function $f : J\to \mathbb{R}$, we say that
$f$ admits a support line at $x \in J $ if there exists a $\lambda
\in \mathbb{R}$ such that
\begin{align}
f\left( t \right) \ge f\left( x \right) + \lambda \left( {t - x}
\right) \label{eq1.3}
\end{align}
for all $t\in J$.

The set of all such $\lambda$ is called the subdifferential of $f$
at $x$, and it's denoted by $\partial f$. Indeed, the
subdifferential gives us the slopes of the supporting lines for
the graph of $f$. So that if $f$ is convex then $\partial f(x) \ne
\emptyset$ at all interior points of its domain.

From this point of view  Abramovich \etal \cite{SJS} extend the
above idea for what they called superquadratic functions. Namely,
a function $f:[0,\infty)\to \mathbb{R}$ is called superquadratic
provided that for all $x\ge0$ there exists a constant $C_x\in
\mathbb{R}$ such that
\begin{align}
f\left( t \right) \ge f\left( x \right) + C_x \left( {t - x}
\right) + f\left( {\left| {t - x} \right|} \right)\label{eq1.4}
\end{align}
for all $t\ge0$. We say that $f$ is subquadratic if $-f$ is
superquadratic. Thus, for a superquadratic function we require
that $f$ lie above its tangent line plus a translation of $f$
itself.

Prima facie, superquadratic function  looks  to be stronger than
convex function itself but if $f$ takes negative values then it
may be considered as a weaker function. Therefore, if $f$ is
superquadratic and non-negative. Then $f$ is convex and increasing
\cite{SJS} (see also \cite{S}).

Moreover,   the following result holds for superquadratic
function.

\begin{lemma}\cite{SJS}
\label{lemma1}Let $f$ be superquadratic function. Then
\begin{enumerate}
    \item $f\left(0\right)\le 0$

    \item If $f$ is differentiable and $f(0)=f^{\prime}(0)=0$, then $C_x=f^{\prime}(x)$  for all $x\ge0$.

    \item If $f(x)\ge0$ for all $x\ge 0$, then $f$ is convex and $f(0)=f^{\prime}(0)=0$.
\end{enumerate}
\end{lemma}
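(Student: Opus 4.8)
The plan is to obtain all three items directly from the defining inequality \eqref{eq1.4}, treating it as a support‑line‑type estimate that additionally carries the extra term $f(|t-x|)$. For item (1) I would simply set $t=x$ in \eqref{eq1.4}: the linear term drops out and one is left with $f(x)\ge f(x)+f(0)$, i.e. $f(0)\le 0$.

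For item (2), fix $x>0$ and rewrite \eqref{eq1.4} as $f(t)-f(x)\ge C_x(t-x)+f(|t-x|)$. For $t>x$, dividing by $t-x>0$ gives $\frac{f(t)-f(x)}{t-x}\ge C_x+\frac{f(t-x)}{t-x}$; letting $t\downarrow x$ and writing $\frac{f(t-x)}{t-x}=\frac{f(t-x)-f(0)}{(t-x)-0}\to f'(0)=0$ yields $f'(x)\ge C_x$. For $0\le t<x$, dividing by $t-x<0$ reverses the inequality, and since $\frac{f(x-t)}{t-x}=-\frac{f(x-t)-f(0)}{(x-t)-0}\to -f'(0)=0$ as $t\uparrow x$ one gets $f'(x)\le C_x$; hence $C_x=f'(x)$ for every $x>0$. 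At $x=0$, \eqref{eq1.4} collapses (using $f(0)=0$) to $f(t)\ge C_0 t+f(t)$, which is satisfied by $C_0=0=f'(0)$, so the claim holds there as well.

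For item (3), item (1) together with the hypothesis $f\ge 0$ forces $f(0)=0$. Since $f(|t-x|)\ge 0$, \eqref{eq1.4} gives $f(t)\ge f(x)+C_x(t-x)$ for all $t,x\ge 0$, so $f$ admits a support line at every point of $[0,\infty)$; by the standard averaging argument (express the evaluation point as a convex combination of two others and add the two support inequalities) this makes $f$ convex on $[0,\infty)$, and hence the right‑hand derivative $f'(0)$ exists. It remains to check $f'(0)=0$. Since $f\ge 0=f(0)$, the origin minimizes $f$, which gives $f'(0)\ge 0$. For the opposite inequality I would combine two facts already in hand: evaluating the support line at $x>0$ at the point $0$ (with $f(0)=0$) gives $C_x x\ge f(x)$, while \eqref{eq1.4} with $t=2x$ gives $f(2x)\ge 2f(x)+C_x x$; eliminating $C_x x$ produces $f(2x)\ge 3f(x)$ for all $x>0$, hence $f(2^{-n}x)\le 3^{-n}f(x)$, so the chord slopes obey $\frac{f(2^{-n}x)}{2^{-n}x}\le(2/3)^n\frac{f(x)}{x}\to 0$; as these slopes decrease to $f'(0)$ as the argument tends to $0$ (convexity), we conclude $f'(0)\le 0$, and therefore $f'(0)=0$.

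The main obstacle is the identity $f'(0)=0$ in item (3): minimality of $f$ at the origin supplies only one of the two inequalities, and the reverse one seems to require genuinely exploiting the superquadratic structure — the $t=2x$ substitution and the ensuing iteration — rather than convexity alone. The limiting argument in item (2) (controlling $f(|t-x|)/(t-x)$ through differentiability at $0$) is the other place where a little care is needed; items (1) and the convexity half of (3) are immediate.
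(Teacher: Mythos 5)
Your proposal is correct, and there is nothing in the paper to compare it with line by line: the lemma is imported from Abramovich--Jameson--Sinnamon \cite{SJS} and stated without proof, so what you have written is a self-contained reconstruction. Items (1) and (2) follow the standard route: setting $t=x$ in \eqref{eq1.4} gives $f(0)\le 0$; and for $x>0$ dividing \eqref{eq1.4} by $t-x$ on either side of $x$ and using $f(0)=f'(0)=0$ to force $f(|t-x|)/(t-x)\to 0$ pins the admissible constant down to $C_x=f'(x)$. Your item (3) is the part that genuinely needs an idea, and your argument works: dropping the nonnegative term $f(|t-x|)$ gives a support line at every point of $[0,\infty)$, hence convexity by the usual averaging step; then eliminating $C_x x$ between the support inequality evaluated at $t=0$ (which gives $C_x x\ge f(x)$) and \eqref{eq1.4} at $t=2x$ (which gives $f(2x)\ge 2f(x)+C_x x$) yields $f(2x)\ge 3f(x)$, so $f(2^{-n}x)\le 3^{-n}f(x)$ and the chord slopes $f(h)/h$ along $h=2^{-n}x$ tend to $0$; since by convexity these slopes decrease to the right derivative at $0$, and that derivative is $\ge 0$ because $0$ is a global minimum, you get $f'(0^+)=0$. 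Two minor points worth making explicit: at $x=0$ the constant in \eqref{eq1.4} is not unique when $f(0)=0$ (any $C_0\le 0$ works), so item (2) at the endpoint can only be read as saying that $f'(0)=0$ is an admissible choice, which is how you treat it; and in item (3) the symbol $f'(0)$ must mean the one-sided derivative at the endpoint, whose existence and finiteness you should note follows from convexity together with the nonnegativity of the slopes $f(h)/h$ --- your argument already contains both facts. I see no gaps.
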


The next result gives a sufficient condition when convexity
(concavity) implies super(sub)quaradicity.

\begin{lemma}\cite{SJS}
\label{lemma2}If $f^{\prime}$ is convex (concave) and
$f(0)=f^{\prime}(0)=0$, then is super(sub)quadratic. The converse
of is not true.
\end{lemma}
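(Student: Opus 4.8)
The plan is to exhibit the constant $C_x$ in \eqref{eq1.4} explicitly, namely $C_x=f'(x)$ (which is moreover the only admissible choice, by Lemma~\ref{lemma1}(2)). Fix $x\ge 0$ and set
\[
F(t):=f(t)-f(x)-f'(x)(t-x)-f\!\left(\left|t-x\right|\right),\qquad t\ge 0,
\]
so that $f$ being superquadratic is precisely the statement $F(t)\ge 0$ for all $t\ge 0$. I would prove this by showing that $F$ is nonincreasing on $[0,x]$ and nondecreasing on $[x,\infty)$; then its minimum is $F(x)=-f(0)=0$, where the hypothesis $f(0)=0$ enters (an appeal to Lemma~\ref{lemma1}(1) here would be circular and, fortunately, is not needed). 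The engine for both monotonicity claims is the elementary majorization inequality for a convex function $g$ on an interval: if $a\le c\le d\le b$ lie in the interval and $a+b=c+d$, then $g(c)+g(d)\le g(a)+g(b)$ — write $c=\lambda a+(1-\lambda)b$, note $d=(1-\lambda)a+\lambda b$, and add the two convexity estimates.

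For $t\ge x$ we have $F'(t)=f'(t)-f'(x)-f'(t-x)$; since $\{x,t-x\}$ lies in $[0,t]$ with the same sum $t$ as $\{0,t\}$, the majorization inequality applied to the convex function $f'$, together with $f'(0)=0$, yields $f'(x)+f'(t-x)\le f'(0)+f'(t)=f'(t)$, i.e.\ $F'(t)\ge 0$; as $F$ is continuous this gives $F$ nondecreasing on $[x,\infty)$. For $0\le t\le x$ we have $F'(t)=f'(t)-f'(x)+f'(x-t)$, and now $\{t,x-t\}$ lies in $[0,x]$ with the same sum $x$ as $\{0,x\}$, so $f'(t)+f'(x-t)\le f'(0)+f'(x)=f'(x)$, i.e.\ $F'(t)\le 0$, giving $F$ nonincreasing on $[0,x]$. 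Hence $F(t)\ge F(x)=0$ for all $t\ge 0$, which is \eqref{eq1.4} with $C_x=f'(x)$, so $f$ is superquadratic. The subquadratic case follows at once by applying this to $-f$: if $f'$ is concave then $(-f)'=-f'$ is convex with $(-f)(0)=(-f)'(0)=0$, so $-f$ is superquadratic and $f$ is subquadratic by definition.

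No serious regularity issue arises: $f$ is continuous and, away from $t=x$, $F$ is differentiable with the derivative computed above, while near $t=x$ one has $f(|t-x|)=o(|t-x|)$ because $f(0)=f'(0)=0$; thus $F$ is continuous everywhere and $F'$ has the asserted sign on each relevant open interval, which suffices for the monotonicity conclusions. The only real point of care is the two-case split created by the absolute value in \eqref{eq1.4} and keeping the direction of the majorization inequality straight in each case — the case $t\le x$, where one must compare $\{t,x-t\}$ with $\{0,x\}$ rather than with $\{0,t\}$, being the more error-prone one. For the closing remark that the converse is false, I would simply display a superquadratic function violating the lemma's conclusion: $f(t)=t^2-1$ satisfies \eqref{eq1.4} with $C_x=2x$ (the inequality then reduces to $t^2-1\ge t^2-2$), yet $f(0)=-1\neq 0$; a subtler example satisfying $f(0)=f'(0)=0$ but with $f'$ not convex can be taken from \cite{SJS}.
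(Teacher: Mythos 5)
Your proof is correct. Note that the paper itself offers no proof of this lemma: it is quoted verbatim from \cite{SJS}, so there is nothing internal to compare against. Your argument — take $C_x=f'(x)$, form the defect $F(t)=f(t)-f(x)-f'(x)(t-x)-f(|t-x|)$, and show via the two-point majorization inequality for the convex function $f'$ (with $f'(0)=0$) that $F$ is nonincreasing on $[0,x]$ and nondecreasing on $[x,\infty)$, hence minimized at $F(x)=-f(0)=0$ — is essentially the standard argument in the cited reference, which there appears in the equivalent integral form $f(t)-f(x)-f(t-x)=\int_0^{t-x}\bigl[f'(s+x)-f'(s)\bigr]\,ds\ge (t-x)\,f'(x)$, using the increasing-increments property of the convex function $f'$. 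Both case splits and the reduction of the concave case to $-f$ are handled correctly, and the regularity remarks suffice since only continuity at $t=x$ and differentiability off $t=x$ are used. One small caveat: your counterexample $f(t)=t^2-1$ refutes the converse only through the failure of $f(0)=0$; the sharper assertion implicit in \cite{SJS} — a superquadratic $f$ with $f(0)=f'(0)=0$ whose derivative is not convex — you correctly acknowledge but defer to the reference rather than exhibit.
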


\begin{remark}
    Subquadraticity does always not imply concavity; i.e.,  there exists a subquadratic function which is convex. For example, $f(x)=x^p$, $x\ge 0$ and $1\le  p \le2$ is subquadratic and convex.
\end{remark}

%=======================================================================================================
\subsection{Popoviciu's inequality}
%=======================================================================================================
In 1906, Jensen in \cite{J} proved his famous characterization of
convex functions. Simply, for a continuous functions $f$ defined
on a real interval $I$, $f$ is convex if and only if
\begin{align*}
f\left( {\frac{{x + y}}{2}} \right) \le \frac{{f\left( x \right) +
        f\left( y \right)}}{2},
\end{align*}
for all $x,y\in I$.

In 1965, a parallel characterization of Jensen convexity was
presented by Popoviciu \cite{P} (for more details see \cite{NP},
p.6), where he proved his celebrated inequality, as follows:
\begin{theorem}
\label{thm2}    Let $f:I\to \mathbb{R}$ be continuous. Then, $f$
is convex if and
    only if
    \begin{align}
    \frac{2}{3}\left[{ f\left( {\frac{{x + z}}{2}} \right)+f\left(
        {\frac{{y + z}}{2}} \right)+f\left( {\frac{{x + y}}{2}}
        \right)}\right]
    \le f\left( {\frac{{x + y + z}}{3}} \right)
    +\frac{f\left( x \right)+f\left( y \right)+f\left( z \right)}{3}
    \label{eq1.5}
    \end{align}
    for all $x,y,z\in I$, and the equality occurred by $f(x)=x$, $x\in
    I$.
\end{theorem}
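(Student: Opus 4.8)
The plan is to prove the two implications of Theorem~\ref{thm2} separately, the work being concentrated in ``$f$ convex $\Longrightarrow$ \eqref{eq1.5}''. For that direction I would start with two reductions. Every term of \eqref{eq1.5} is symmetric in $x,y,z$, so one may assume $x\le y\le z$; and the total coefficient on each side of \eqref{eq1.5} equals $2$ while the corresponding weighted mean of the arguments equals $\tfrac23(x+y+z)$ on each side, so \eqref{eq1.5} is unchanged when an affine function is added to $f$. Setting $m:=\tfrac{x+y+z}{3}$ and clearing the factor $\tfrac23$, inequality \eqref{eq1.5} is equivalent to
\[
f(x)+f(y)+f(z)+3f(m)\ \ge\ 2f\!\left(\tfrac{x+y}{2}\right)+2f\!\left(\tfrac{y+z}{2}\right)+2f\!\left(\tfrac{x+z}{2}\right),
\]
that is, to $\sum_{i=1}^{6}f(p_i)\ge\sum_{i=1}^{6}f(q_i)$ with $p=(x,y,z,m,m,m)$ and $q=\bigl(\tfrac{x+y}{2},\tfrac{x+y}{2},\tfrac{y+z}{2},\tfrac{y+z}{2},\tfrac{x+z}{2},\tfrac{x+z}{2}\bigr)$; one checks at once that the entries of $p$ and of $q$ have the same sum $2(x+y+z)$.

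Next I would deduce this inequality from the Karamata (majorization) principle, which in the present setting is nothing more than the support-line property \eqref{eq1.3} used coordinatewise. Arrange the entries of $p$ and of $q$ in decreasing order, and let $\lambda_i$ be a subgradient of $f$ at the $i$-th largest entry $q_i^{\downarrow}$ of $q$ (taking the same $\lambda_i$ for equal entries); then the monotonicity of $\partial f$ forces $\lambda_1\ge\cdots\ge\lambda_6$, and summing the support-line inequalities $f(p_i^{\downarrow})\ge f(q_i^{\downarrow})+\lambda_i\bigl(p_i^{\downarrow}-q_i^{\downarrow}\bigr)$ by parts against the partial sums $D_k:=\sum_{j\le k}\bigl(p_j^{\downarrow}-q_j^{\downarrow}\bigr)$ gives
\[
\sum_{i=1}^{6}f(p_i)-\sum_{i=1}^{6}f(q_i)\ \ge\ \sum_{k=1}^{5}(\lambda_k-\lambda_{k+1})D_k\ \ge\ 0,
\]
provided all $D_k\ge0$. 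So the forward direction reduces to verifying the majorization $D_k\ge0$ for $k=1,\dots,5$. Since $x\le m\le z$ always, the decreasing rearrangement of $p$ depends on whether $y\le m$ or $y\ge m$ (equivalently on the sign of $2y-(x+z)$), so the verification splits into two cases; in each, under $x\le y\le z$, every one of the five inequalities $D_k\ge0$ collapses to one of the elementary relations $x\le y$, $y\le z$, $2y\le x+z$, or $2y\ge x+z$. This case analysis --- the split into two cases plus five comparisons per case --- is the one place that needs care; everything else is automatic. Taking $f$ affine (in particular $f(t)=t$) turns \eqref{eq1.5} into an equality by the coefficient computation above, which records the stated equality case.

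For the converse I would test \eqref{eq1.5} on the degenerate triple $z=x$. Then the arguments $\tfrac{x+z}{2},\tfrac{y+z}{2},\tfrac{x+y}{2}$ become $x,\tfrac{x+y}{2},\tfrac{x+y}{2}$ and $\tfrac{x+y+z}{3}$ becomes $\tfrac{2x+y}{3}$; cancelling the common $f(x)$ terms leaves
\[
f\!\left(\tfrac{x+y}{2}\right)\ \le\ \tfrac34\,f\!\left(\tfrac{2x+y}{3}\right)+\tfrac14\,f(y)\qquad (x,y\in I).
\]
Since $\tfrac{x+y}{2}=\tfrac34\cdot\tfrac{2x+y}{3}+\tfrac14\,y$, this is precisely a Jensen-type ``$\tfrac14$-convexity'' inequality $f\bigl(\tfrac14 a+\tfrac34 b\bigr)\le\tfrac14 f(a)+\tfrac34 f(b)$, valid at least for all $a,b$ ranging over a subinterval of $I$ small enough that the corresponding $x=\tfrac{3b-a}{2}$ remains in $I$. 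As $f$ is continuous, the classical fact that a continuous function satisfying a nontrivial $\lambda$-convexity inequality on an interval is convex there shows that $f$ is convex near each interior point of $I$, hence convex on $I$. I expect this implication to be routine once the substitution is made; the genuine obstacle in the theorem is the two-case majorization bookkeeping in the forward direction.
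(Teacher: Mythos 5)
Your proposal is correct, but note first that the paper does not actually prove Theorem \ref{thm2}: inequality \eqref{eq1.5} is quoted as Popoviciu's classical characterization with references to \cite{P} and \cite{NP}, so there is no in-paper argument to compare with line by line. What you supply is essentially the standard majorization proof. After symmetrizing and observing that both sides of \eqref{eq1.5} carry total weight $2$ with the same weighted sum of arguments (which also settles the equality statement for affine $f$, in particular $f(t)=t$), the inequality becomes $\sum_i f(p_i)\ge\sum_i f(q_i)$ with $p=(x,y,z,m,m,m)$ and $q$ the doubled pairwise midpoints, and your two-case verification (according to the sign of $2y-(x+z)$, under $x\le y\le z$) is right: the partial sums $D_k$ do reduce to $\tfrac{z-y}{2}$, $\tfrac{y-x}{2}$, multiples of $x+z-2y$ or $2y-x-z$, or to $0$, so $p$ majorizes $q$, and the Abel-summation derivation of the Karamata step from the support-line property \eqref{eq1.3} is sound. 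One small caveat: a subgradient need not exist if a midpoint $q_i$ coincides with an endpoint of $I$ (possible only in degenerate cases such as $x=y=\min I$); this is harmless but should be dispatched explicitly, e.g.\ by proving \eqref{eq1.5} for interior triples and using the assumed continuity of $f$ to pass to the limit. The converse via $z=x$, yielding $f\bigl(\tfrac14 a+\tfrac34 b\bigr)\le\tfrac14 f(a)+\tfrac34 f(b)$ for $a,b$ in suitable subintervals, is correct, and with continuity the classical fact that a fixed-weight Jensen inequality forces convexity gives local, hence global, convexity on $I$; you are leaning on that classical lemma rather than proving it, which is acceptable but worth citing. Compared with the textbook proof in \cite{NP}, which treats the same two cases by writing the three midpoints as explicit convex combinations and applying Jensen's inequality directly, your route trades those explicit combinations for a single majorization check plus the Karamata mechanism --- a cleaner bookkeeping at the cost of slightly heavier machinery.
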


In fact, Popoviciu characterization of convex function is sound and several mathematicians greatly received his work since that time  and much of them considered  his characterization  as alternative approach to describe  convex functions.   For instance, the Popoviciu's inequality can be
considered as an elegant generalization of Hlawka's inequality
using convexity as a simple tool of geometry. 
Indeed, if $f(x)=|x|$, $x\in \mathbb{R}$, then the Popoviciu
inequality reduces to the famous Hlawka  inequality, which reads:
\begin{align*}
\label{Hlawkaineq}\left| x \right| + \left| y \right| + \left| z
\right| + \left| {x + y + z} \right| \ge \left| {x + z} \right| +
\left| {z + y} \right| + \left| {x + y} \right|.
\end{align*}
Geometrically,  Hlawka  inequality means  the total length over
all sums of pairs from three vectors is not greater than the
perimeter of the quadrilateral defined by the three vectors.  This
geometric meaning was given by D. Smiley \& M. Smiley \cite{W}
(see also \cite{SS}, p. 756).  For other related results see
\cite{MPF} and \cite{NF}.

Also, The extended version of Hlawka's
inequality  to
several variables was not possible without the help of Popoviciu's inequality, as it inspired  the authors of \cite{BNP} to develop a
higher dimensional analogue of Popoviciu's inequality based on his
characterization. Interesting generalizations and counterparts of
Popoviciu inequality with some ramified consequences can be
found in \cite{G}, \cite{TTH}, \cite{TTMT} and \cite{VS}.\\

Recenty, The corresponding version of  Popoviciu inequality for
${\rm{GG}}$-convex (Recall that: a positive real valued function $f$ is ${\rm{GG}}$-convex if and only if $
f\left( {x^t y^{1 - y} } \right) \le \left[ {f\left( x \right)} \right]^t \left[ {f\left( y \right)} \right]^{1 - t} $ for all $t\in [0,1]$ and all $x,y \ge0$)  was discussed eighteen
years ago by Niculescu in \cite{N}, where he  proved that for
all $x,y,z\in I \subset [0,\infty)$, the inequality
\begin{align*}
f^2\left( {\sqrt {xz}} \right)f^2\left( {\sqrt {yz}}
\right)f^2\left( {\sqrt {xy}} \right) \le   f^3\left(
{\sqrt[3]{xyz} } \right) f\left( x \right)f\left( y \right)f\left(
z \right)
\end{align*}
holds for all $x,y,z\in I$.

 Seeking the operator version of Popoviciu's inequality \eqref{eq1.5}, the expected version of \eqref{eq1.5} for selfadjoint operators is
\begin{multline*}
\frac{2}{3}\left[f\left( {\left\langle { \frac{{A + B}}{2} u,u} \right\rangle } \right) + f\left(
{\left\langle { \frac{{B+D}}{2} u,u}\right\rangle } \right)+ f\left( {\left\langle { \frac{{A + D}}{2} u,u} \right\rangle }
\right)\right] 
\\
\le \left\langle { \frac{{f\left( A \right) + f\left(B \right) + f\left( D \right)}}{3} u,u} \right\rangle+f\left( {\left\langle { \frac{{A + B + D}}{3}u,u} \right\rangle } \right)
\end{multline*}
for every selfadjoint operators $A,B,D\in \mathcal{B}\left(\mathcal{H} \right)$ whose spectra contained in $I$ and every convex function  $f$ defined on $I$ and this is valid for each $u\in \mathcal{K}$ with $\|u\| =1$. The proof of the above inequality is obvious by taking $x=\left\langle { A u,u} \right\rangle$, $y=\left\langle { B u,u} \right\rangle$ and $z=\left\langle { D u,u} \right\rangle$ in \eqref{eq1.5}.

In this work, we offer two operator versions of 
Popoviciu's inequality for positive selfadjoint
operators in Hilbert spaces under positive linear maps for
both superquadratic and convex functions with some other related results.

%=======================================================================================================
\section{Main Result}
%=======================================================================================================

Throughout this work and in all needed situations, $f$ is real
valued continuous function defined on $\left[0,\infty\right)$. In
order to prove our main result, we need the following result 
concerning    Jensen's inequality for superquadratic functions.
Let us don't miss the chance here to mention that the next
result was proved in   \cite{KS} and originally in \cite{K}  for
positive selfadjoint $(n \times n)$--matrices with complex entries
under unital completely positive linear maps. However, let us
state down this result in more general Hilbert spaces for
normalized   positive linear maps.
\begin{theorem}
\label{thm1}Let $A\in \mathcal{B}\left(\mathcal{H} \right)$ be a
positive selfadjoint operator, $\Phi:\mathcal{B}\left(\mathcal{H}
\right)\to \mathcal{B}\left(\mathcal{K} \right)$  be a normalized
positive linear map. If   $f:\left[0,\infty\right)\to \mathbb{R}$
is super(sub)quadratic, then we have
\begin{align} 
\left\langle {\Phi \left( {f\left( A \right)}
\right)x,x} \right\rangle \ge (\le)  f\left( {\left\langle {\Phi
\left( {A} \right)x,x} \right\rangle } \right)   + \left\langle
{\Phi \left( {f\left( {\left| {A - \left\langle
{\Phi\left(A\right)x,x} \right\rangle 1_{\mathcal{H}} } \right|}
\right)} \right)x,x} \right\rangle
\end{align}
for every $x\in \mathcal{K}$ with $\|x\|=1$.
\end{theorem}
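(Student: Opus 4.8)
The plan is to follow the classical route for operator Jensen-type inequalities: fix the scalar about which to linearise, push the one-variable superquadratic estimate \eqref{eq1.4} through the continuous functional calculus, then through $\Phi$, and finally through a vector state. First I would fix a unit vector $x\in\mathcal{K}$ and set $\alpha:=\langle\Phi(A)x,x\rangle$. Since $A\ge 0$ and $\Phi$ is positive (hence order-preserving), $\Phi(A)\ge 0$, so $\alpha\in[0,\infty)$; this is exactly the place where positivity of $A$, and not merely selfadjointness, is used, and it lets me invoke \eqref{eq1.4} at the point $\alpha$ with its associated constant $C_\alpha$ to get
\begin{align*}
f(t)\ge f(\alpha)+C_\alpha(t-\alpha)+f\bigl(|t-\alpha|\bigr)\qquad\text{for all }t\ge 0.
\end{align*}

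Next I would restrict $t$ to $\spe(A)\subseteq[0,\infty)$ and observe that the right-hand side above is a continuous function of $t$ (a constant, plus a linear term, plus the composition of the continuous $f$ with $t\mapsto|t-\alpha|$). Hence the order-preserving property \eqref{eq1.1} of the functional calculus gives the operator inequality
\begin{align*}
f(A)\ge f(\alpha)\,1_{\mathcal{H}}+C_\alpha\bigl(A-\alpha\,1_{\mathcal{H}}\bigr)+f\bigl(|A-\alpha\,1_{\mathcal{H}}|\bigr),
\end{align*}
where $|A-\alpha\,1_{\mathcal{H}}|$ denotes the operator modulus, which is a positive operator so that $f$ is defined on its spectrum; identifying the three terms on the right uses linearity of $\mathcal{G}$, the normalisations $\mathcal{G}(f_0)=1_{\mathcal{H}}$ and $\mathcal{G}(f_1)=A$, and the composition rule for the functional calculus.

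Then I would apply $\Phi$. As $\Phi$ is linear, positive (so it preserves the operator order) and normalised, i.e.\ $\Phi(1_{\mathcal{H}})=1_{\mathcal{K}}$, this yields
\begin{align*}
\Phi\bigl(f(A)\bigr)\ge f(\alpha)\,1_{\mathcal{K}}+C_\alpha\bigl(\Phi(A)-\alpha\,1_{\mathcal{K}}\bigr)+\Phi\bigl(f(|A-\alpha\,1_{\mathcal{H}}|)\bigr).
\end{align*}
Taking the inner product with $x$ and using $\|x\|=1$ together with $\langle\Phi(A)x,x\rangle=\alpha$, the middle term becomes $C_\alpha(\alpha-\alpha)=0$ and what remains is precisely the asserted inequality. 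The subquadratic case is then immediate by applying the result to $-f$.

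I expect no serious obstacle here: the argument is essentially a bookkeeping exercise in pushing \eqref{eq1.4} through three order-preserving operations, and the cancellation of the linear term is what makes the statement clean. The one step that deserves genuine care is the scalar-to-operator passage — verifying that $t\mapsto f(|t-\alpha|)$ is continuous on $\spe(A)$ so that \eqref{eq1.1} legitimately applies, and invoking the composition property of the calculus so that this function is carried to $f(|A-\alpha\,1_{\mathcal{H}}|)$ rather than to something only formally resembling it.
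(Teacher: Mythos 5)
Your argument is correct and is essentially the paper's own technique: the paper quotes this result from Kian and Kian--Dragomir, but the identical functional-calculus argument appears as inequality \eqref{eq2.2} in the proof of Theorem \ref{thm2.1} — push \eqref{eq1.4} through \eqref{eq1.1}, then through the normalized positive map $\Phi$, then through the vector state — and your choice $s=\left\langle \Phi(A)x,x\right\rangle$ is exactly the specialization that kills the linear term. Your attention to the continuity of $t\mapsto f(|t-\alpha|)$ and the composition rule of the calculus, and the passage to $-f$ for the subquadratic case, are all fine.
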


For more recent results concerning inequalities for selfadjoint operatos and other related result, we suggest
\cite{SIP}, \cite{AD}--\cite{FMPS}, \cite{KLPP}, \cite{MPP}   and
\cite{MP}.

The  operator version of Popoviciu's inequality for superquadratic
functions under positive linear maps is proved in the next result.
\begin{theorem}
    \label{thm2.1}Let $A,B,C\in \mathcal{B}\left(\mathcal{H} \right)$ be three positive selfadjoint operators, $\Phi:\mathcal{B}\left(\mathcal{H} \right)\to \mathcal{B}\left(\mathcal{K} \right)$  be a normalized positive linear map. If  $f:\left[0,\infty\right)\to \mathbb{R}$    is superquadratic, then we have
    \begin{align}
    &\left\langle {\Phi \left( {\frac{{f\left( A \right) + f\left(
                    B \right) + f\left( D \right)}}{3}} \right)x,x} \right\rangle
    +f\left( {\left\langle {\Phi \left( {\frac{{A + B + D}}{3}}
            \right)x,x} \right\rangle } \right)
    \label{eq2.3}\\
    &\ge \frac{2}{3}\left[f\left( {\left\langle {\Phi \left(
            {\frac{{A + B}}{2}} \right)x,x} \right\rangle } \right) + f\left(
    {\left\langle {\Phi \left( {\frac{{B+D}}{2}} \right)x,x}
        \right\rangle } \right)+ f\left( {\left\langle {\Phi \left(
            {\frac{{A + D}}{2}} \right)x,x} \right\rangle }
    \right)\right]\nonumber
    \\
    &\qquad+ \frac{1}{3} \left[\left\langle {\Phi \left( {f\left(
            {\left| {A -
                    \left\langle {\Phi\left(\frac{{B + D}}{2}\right)x,x}
                    \right\rangle 1_{\mathcal{H}} } \right|} \right)} \right)x,x}
    \right\rangle +     f\left( {\left| {\left\langle
            {\Phi \left( {\frac{{2A - B - D}}{6}} \right)x,x} \right\rangle
        } \right|} \right)\right.
    \nonumber\\
    &\qquad\qquad+\left\langle {\Phi \left( {f\left( {\left| {D -
                    \left\langle {\Phi\left(\frac{{A + B}}{2}\right)x,x}
                    \right\rangle 1_{\mathcal{H}} } \right|} \right)} \right)x,x}
    \right\rangle+  f\left( {\left| {\left\langle {\Phi
                \left( {\frac{{2D - A - B}}{6}} \right)x,x} \right\rangle }
        \right|} \right)
    \nonumber\\
    &\qquad\qquad\left.  + \left\langle {\Phi \left( {f\left( {\left|
                {B -
                    \left\langle {\Phi\left(\frac{{A + D}}{2}\right)x,x}
                    \right\rangle 1_{\mathcal{H}} } \right|} \right)} \right)x,x}
    \right\rangle + f\left( {\left| {\left\langle
            {\Phi \left( {\frac{{2B - A - D}}{6}} \right)x,x} \right\rangle
        } \right|} \right)\right]\nonumber
    \end{align}
    for each $x\in \mathcal{K}$ with $\|x\| =1$.\\
\end{theorem}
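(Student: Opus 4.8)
\medskip
\noindent\emph{Proof sketch.} The idea is to transport the inequality to a scalar Popoviciu-type statement for $f$ through the continuous functional calculus, and then to play the support-line definition \eqref{eq1.4} off against Theorem \ref{thm1}. Fix a unit vector $x\in\mathcal K$ and set $a=\langle\Phi(A)x,x\rangle$, $b=\langle\Phi(B)x,x\rangle$, $d=\langle\Phi(D)x,x\rangle$; positivity of $A,B,D$ and of $\Phi$ makes these non-negative reals, and by linearity of $\Phi$ and of the vector state,
\[
m_{1}:=\langle\Phi(\tfrac{B+D}{2})x,x\rangle=\tfrac{b+d}{2},\qquad
m_{2}:=\langle\Phi(\tfrac{A+B}{2})x,x\rangle=\tfrac{a+b}{2},\qquad
m_{3}:=\langle\Phi(\tfrac{A+D}{2})x,x\rangle=\tfrac{a+d}{2},
\]
while $s:=\langle\Phi(\tfrac{A+B+D}{3})x,x\rangle=\tfrac{a+b+d}{3}$. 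The useful feature of these numbers is the barycentric identities $s=\tfrac13a+\tfrac23m_{1}=\tfrac13d+\tfrac23m_{2}=\tfrac13b+\tfrac23m_{3}$ together with $\langle\Phi(\tfrac{2A-B-D}{6})x,x\rangle=\tfrac13(a-m_{1})=s-m_{1}$ and its two cyclic analogues, so that the six quantities sitting under $f$ on the right of \eqref{eq2.3} are precisely $|A-m_{1}1_{\mathcal H}|$, $|s-m_{1}|$ and their cyclic versions.

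\medskip
The first step is to create the three operator correction terms. Feeding \eqref{eq1.4} with base point $c=m_{1}\ge0$ into the functional calculus of $A$ (valid since $\spe(A)\subseteq[0,\infty)$) and invoking \eqref{eq1.1} gives the operator inequality
\[
f(A)\ \ge\ f(m_{1})\,1_{\mathcal H}+C_{m_{1}}\bigl(A-m_{1}1_{\mathcal H}\bigr)+f\bigl(|A-m_{1}1_{\mathcal H}|\bigr);
\]
applying the normalized positive linear map $\Phi$ and then the state $\langle\,\cdot\,x,x\rangle$ yields
\[
\langle\Phi(f(A))x,x\rangle\ \ge\ f(m_{1})+C_{m_{1}}(a-m_{1})+\langle\Phi\bigl(f(|A-m_{1}1_{\mathcal H}|)\bigr)x,x\rangle .
\]
Doing the same with $(D,m_{2})$ and $(B,m_{3})$, averaging the three inequalities, and using $a-m_{1}=3(s-m_{1})$, $d-m_{2}=3(s-m_{2})$, $b-m_{3}=3(s-m_{3})$, one bounds $\langle\Phi(\tfrac{f(A)+f(B)+f(D)}{3})x,x\rangle$ below by $\tfrac13\bigl[f(m_{1})+f(m_{2})+f(m_{3})\bigr]$, plus the three operator terms $\tfrac13\langle\Phi(f(|A-m_{1}1_{\mathcal H}|))x,x\rangle$, etc., plus the tangent remainder $\sum_{i=1}^{3}C_{m_{i}}(s-m_{i})$. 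Adding $f(s)$ to both sides, the theorem reduces to the scalar inequality
\[
f(s)+\sum_{i=1}^{3}C_{m_{i}}(s-m_{i})\ \ge\ \tfrac13\sum_{i=1}^{3}f(m_{i})+\tfrac13\sum_{i=1}^{3}f(|s-m_{i}|),
\]
because $f(|s-m_{1}|)=f\bigl(|\langle\Phi(\tfrac{2A-B-D}{6})x,x\rangle|\bigr)$ and cyclically.

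\medskip
For this residual scalar inequality I would apply the scalar case of Theorem \ref{thm1} — the discrete Jensen inequality for superquadratic functions — to the three weighted means $s=\tfrac13(\,\cdot\,)+\tfrac23m_{i}$, which reproduces the terms $f(|s-m_{i}|)$, and combine it with the support-line inequality \eqref{eq1.4} at the base point $s$ in the form $f(m_{i})+f(|s-m_{i}|)\le f(s)-C_{m_{i}}(s-m_{i})$; summing over $i$ then reduces everything to the behaviour of the tangent remainder $\sum_{i}C_{m_{i}}(s-m_{i})$. Controlling this remainder is the step I expect to be the main obstacle: the support-line constants $C_{m_{i}}$ are absent from \eqref{eq2.3}, so one must extract their contribution from the position of $m_{1},m_{2},m_{3}$ relative to their barycenter $s$ (note $\sum_{i}(s-m_{i})=0$); when $f$ is non-negative it is convex and increasing with monotone slopes $C_{c}$ (Lemma \ref{lemma1}), and there the argument should come down to a rearrangement estimate, the general sign-changing case being the most delicate point. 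The convex version announced in the abstract then follows by the identical scheme, using the ordinary support line \eqref{eq1.3} in place of \eqref{eq1.4} and Theorem \ref{thm2} to supply the classical Popoviciu inequality.
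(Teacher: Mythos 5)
Your reduction is carried out correctly and is essentially the paper's own scheme: pushing \eqref{eq1.4} through the functional calculus at the three midpoints $m_1,m_2,m_3$ and averaging is exactly how the paper arrives at its inequality \eqref{eq2.7}, and your residual scalar inequality $f(s)+\sum_{i}C_{m_i}(s-m_i)\ \ge\ \frac13\sum_i f(m_i)+\frac13\sum_i f(|s-m_i|)$ is precisely what the paper's second half (the scalar applications \eqref{eq2.9}--\eqref{eq2.12}) is supposed to furnish. But the step you flag as the ``main obstacle'' is a genuine gap, and it cannot be closed along the line you sketch. Since $s$ is the barycenter of $m_1,m_2,m_3$ and, for a non-negative superquadratic $f$, the slopes $C_t$ are nondecreasing in $t$ (convexity, Lemma \ref{lemma1}), Chebyshev's sum inequality gives $\sum_i C_{m_i}(m_i-s)\ge 0$, that is $\sum_i C_{m_i}(s-m_i)\le 0$ --- the \emph{opposite} sign to the one your final step needs. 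Concretely, for $f(t)=t^2$ (where $C_t=2t$ and every superquadraticity inequality you invoke is an equality) your residual inequality collapses to $3s^{2}\ge m_1^{2}+m_2^{2}+m_3^{2}$, which fails unless $m_1=m_2=m_3$; and taking $\mathcal H=\mathcal K=\mathbb C$, $\Phi$ the identity, $A=1$, $B=D=0$, the right-hand side of \eqref{eq2.3} equals $8/9$ while the left-hand side equals $4/9$, so no rearrangement estimate (or any other device) can control the remainder in the generality claimed.

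For comparison, the paper disposes of the tangent remainders differently: in passing from \eqref{eq2.7} to \eqref{eq2.8} and from \eqref{eq2.12} to \eqref{eq2.13} it replaces every $C_{s_i}$ by $C:=\min_i C_{s_i}$ and uses that the three coefficients $\left\langle{\Phi\left({\frac{2A-B-D}{2}}\right)x,x}\right\rangle$, etc., sum to zero, so that the substituted sum vanishes. That substitution, however, only preserves the direction of the inequality when each coefficient is non-negative, and coefficients summing to zero cannot all be positive unless they all vanish; so the paper's treatment of exactly the step that stopped you relies on sign information that is not available, which is consistent with the $f(t)=t^{2}$ computation above. In short, your difficulty is not a failure to spot the intended trick: it is the genuine weak point of \eqref{eq2.3}, and a complete argument would have to either retain the remainder $\sum_i C_{m_i}(s-m_i)$ explicitly on the left-hand side or impose extra hypotheses (e.g.\ conditions forcing the three midpoint values to coincide) rather than eliminate it.
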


\begin{proof}
	Since $f$ is  superquadratic  on $I$, then
	by utilizing the continuous functional calculus for the operator
	$E\ge 0$ we have by the property \eqref{eq1.1} for the inequality
	\eqref{eq1.4}  we have
	\begin{align*}
	f\left( E \right) \ge f\left( s \right)\cdot 1_{\mathcal{H}} +
	C_s \left( {E -s \cdot 1_{\mathcal{H}}} \right) +
	f\left( {\left| {E - s\cdot 1_{\mathcal{H}}} \right|} \right).
	\end{align*}
	and since $\Phi$ is normalized positive linear map we get
	\begin{align*}
	\Phi\left(f\left( E \right)\right) \ge  f\left( s \right)\cdot
	1_{\mathcal{K}}   + C_s  \Phi\left(E -s \cdot
	1_{\mathcal{H}} \right) + \Phi\left(f\left( {\left| {E - s\cdot
			1_{\mathcal{H}}} \right|} \right)\right)
	\end{align*}
	and this  implies that
	\begin{align}
	\left\langle {\Phi \left( {f\left( E \right)} \right)x,x}
	\right\rangle  \ge f\left( {s} \right) \left\langle { x,x}
	\right\rangle  + C_s \left\langle {\left[ {\Phi \left( {E - s
				\cdot 1_{\mathcal{H}} } \right)} \right]x,x} \right\rangle  +
	\left\langle {\Phi \left( {f\left( {\left| {E - s \cdot
					1_{\mathcal{H}}  } \right|} \right)} \right)x,x} \right\rangle
	\label{eq2.2}
	\end{align}
	for each vector $x\in \mathcal{K}$ with $\|x\|=1$.
	
    Let $A,B,D$ be three positive selfadjoint operators in $
    \mathcal{B}\left(\mathcal{H} \right)$. Since $f$ is superquadratic
    then by applying \eqref{eq2.2}  for the operator $A\ge0$ with $s_1=
    \left\langle {\Phi\left(\frac{{B + D}}{2}\right)x,x}
    \right\rangle$, we get
    \begin{align}
    \left\langle {\Phi \left( {f\left( A \right)} \right)x,x}
    \right\rangle &\ge f\left( {\left\langle {\Phi \left( {\frac{{B +
                        D}}{2}} \right)x,x} \right\rangle } \right) +C_{s_1} \left\langle {\Phi \left( {\frac{{2A
                    - B - D}}{2}} \right)x,x} \right\rangle  \label{eq2.4}
    \\
    &\qquad+ \left\langle {\Phi \left( {f\left( {\left| {A -
                    \left\langle {\Phi \left( {\frac{{B + D}}{2}} \right)x,x}
                    \right\rangle 1_{\mathcal{H}} } \right|} \right)} \right)x,x}
    \right\rangle \nonumber
    \end{align}
    for each $x\in \mathcal{K}$ with $\|x\| =1$.

    Again applying \eqref{eq2.2}  for the operator  $D\ge0$ with $s_2=
    \left\langle {\Phi\left(\frac{{A + B}}{2}\right)x,x}
    \right\rangle$
    \begin{align}
    \left\langle {\Phi \left( {f\left( D \right)} \right)x,x}
    \right\rangle &\ge f\left( {\left\langle {\Phi \left( {\frac{{A +
                        B}}{2}} \right)x,x} \right\rangle } \right) + C_{s_2} \left\langle {\Phi \left(
        {\frac{{2D-A-B}}{2}} \right)x,x} \right\rangle \label{eq2.5}
    \\
    &\qquad + \left\langle {\Phi \left( {f\left( {\left| {D -
                    \left\langle {\Phi \left( {\frac{{A + B}}{2}} \right)x,x}
                    \right\rangle 1_{\mathcal{H}} } \right|} \right)} \right)x,x}
    \right\rangle  \nonumber
    \end{align}
    for each $x\in \mathcal{K}$ with $\|x\| =1$.

    Also, for the operator  $B\ge0$ with $s_3= \left\langle
    {\Phi\left(\frac{{A + D}}{2}\right)x,x} \right\rangle$
    \begin{align}
    \left\langle {\Phi \left( {f\left(B \right)} \right)x,x}
    \right\rangle &\ge f\left( {\left\langle {\Phi \left( {\frac{{A +
                        D}}{2}} \right)x,x} \right\rangle } \right) + C_{s_3} \left\langle {\Phi \left( { \frac{{2B
                    - A - D}}{2} } \right)x,x} \right\rangle \label{eq2.6}
    \\
    &\qquad + \left\langle {\Phi \left( {f\left( {\left| {B -
                    \left\langle {\Phi \left( {\frac{{A + D}}{2}} \right)x,x}
                    \right\rangle 1_{\mathcal{H}} } \right|} \right)} \right)x,x}
    \right\rangle. \nonumber
    \end{align}
    for each $x\in \mathcal{K}$ with $\|x\| =1$.

    Adding the inequalities \eqref{eq2.4}--\eqref{eq2.6} and then
    multiplying by $\frac{1}{3}$ we get
    \begin{align}
    &\left\langle {\Phi \left( {\frac{{f\left( A \right) + f\left( B
                    \right) + f\left( D \right)}}{3}} \right)x,x} \right\rangle
    \nonumber\\
    &\ge \frac{1}{3}\left[f\left( {\left\langle {\Phi \left( {\frac{{A
                        + B}}{2}} \right)x,x} \right\rangle } \right) + f\left(
    {\left\langle {\Phi \left( {\frac{{B+D}}{2}} \right)x,x}
        \right\rangle } \right)+ f\left( {\left\langle {\Phi \left(
            {\frac{{A + D}}{2}} \right)x,x} \right\rangle } \right)\right]
    \label{eq2.7}\\
    &\qquad+  \frac{1}{3}\left[C_{s_1} \left\langle {\Phi \left(
        {\frac{{2A - B-D}}{2}} \right)x,x} \right\rangle +C_{s_2} \left\langle
    {\Phi \left( {\frac{{2D-A-B}}{2}} \right)x,x} \right\rangle
    \right.
    \nonumber\\
    &\qquad\qquad+ C_{ s_3} \left\langle {\Phi \left( {
            \frac{{2B - A - D}}{2} } \right)x,x} \right\rangle
    \nonumber\\
    &\qquad+ \left\langle {\Phi \left( {f\left( {\left| {A -
                    \left\langle {\Phi\left(\frac{{B + D}}{2}\right)x,x} \right\rangle
                    1_{\mathcal{H}} } \right|} \right)} \right)x,x} \right\rangle +
    \left\langle {\Phi \left( {f\left( {\left| {D - \left\langle
                    {\Phi\left(\frac{{A+B}}{2}\right)x,x} \right\rangle
                    1_{\mathcal{H}} } \right|} \right)} \right)x,x} \right\rangle
    \nonumber\\
    &\qquad\qquad\left. + \left\langle {\Phi \left( {f\left( {\left|
                {B - \left\langle {\Phi\left(\frac{{A + D}}{2}\right)x,x}
                    \right\rangle 1_{\mathcal{H}} } \right|} \right)} \right)x,x}
    \right\rangle\right].\nonumber
    \end{align}
    Setting $C:=\min\left\{C_{s_1},C_{s_2},C_{s_3}\right\}$, then \eqref{eq2.7} reduces to
    \begin{align}
    &\left\langle {\Phi \left( {\frac{{f\left( A \right) + f\left( B
                    \right) + f\left( D \right)}}{3}} \right)x,x} \right\rangle
    \nonumber\\
    &\ge \frac{1}{3}\left[f\left( {\left\langle {\Phi \left( {\frac{{A
                        + B}}{2}} \right)x,x} \right\rangle } \right) + f\left(
    {\left\langle {\Phi \left( {\frac{{B+D}}{2}} \right)x,x}
        \right\rangle } \right)+ f\left( {\left\langle {\Phi \left(
            {\frac{{A + D}}{2}} \right)x,x} \right\rangle } \right)\right]
    \nonumber\\
    &\qquad+  \frac{1}{3}C\left[ \left\langle {\Phi \left( {\frac{{2A
                    - B-D}}{2}} \right)x,x} \right\rangle +  \left\langle {\Phi \left(
        {\frac{{2D-A-B}}{2}} \right)x,x} \right\rangle
    + \left\langle {\Phi \left( { \frac{{2B - A - D}}{2} } \right)x,x} \right\rangle \right.
    \nonumber\\
    &\qquad+ \left\langle {\Phi \left( {f\left( {\left| {A -
                    \left\langle {\frac{{B + D}}{2}x,x} \right\rangle 1_{\mathcal{H}}
                } \right|} \right)} \right)x,x} \right\rangle + \left\langle {\Phi
        \left( {f\left( {\left| {D - \left\langle {\frac{{A+B}}{2}x,x}
                    \right\rangle 1_{\mathcal{H}} } \right|} \right)} \right)x,x}
    \right\rangle
    \nonumber\\
    &\qquad\left. + \left\langle {\Phi \left( {f\left( {\left| {B -
                    \left\langle {\frac{{A+D}}{2}x,x} \right\rangle 1_{\mathcal{H}} }
                \right|} \right)} \right)x,x} \right\rangle\right]\nonumber
    \\
    &=\frac{1}{3}\left[f\left( {\left\langle {\Phi \left( {\frac{{A +
                        B}}{2}} \right)x,x} \right\rangle } \right) + f\left(
    {\left\langle {\Phi \left( {\frac{{B+D}}{2}} \right)x,x}
        \right\rangle } \right)+ f\left( {\left\langle {\Phi \left(
            {\frac{{A + D}}{2}} \right)x,x} \right\rangle } \right)\right]
    \label{eq2.8}\\
    &\qquad+\frac{1}{3} \left[ \left\langle {\Phi \left( {f\left(
            {\left| {A -
                    \left\langle {\frac{{B + D}}{2}x,x} \right\rangle 1_{\mathcal{H}}
                } \right|} \right)} \right)x,x} \right\rangle + \left\langle
    {\Phi \left( {f\left( {\left| {D - \left\langle
                    {\frac{{A+B}}{2}x,x} \right\rangle 1_{\mathcal{H}} } \right|}
            \right)} \right)x,x} \right\rangle\right.
    \nonumber\\
    &\qquad\left. + \left\langle {\Phi \left( {f\left( {\left| {B -
                    \left\langle {\frac{{A+D}}{2}x,x} \right\rangle 1_{\mathcal{H}} }
                \right|} \right)} \right)x,x} \right\rangle\right].\nonumber
    \end{align}
    Now, applying \eqref{eq1.4} three times for $t= {\left\langle
        {\Phi \left( {\frac{{A + B + C}}{3}} \right)x,x} \right\rangle }$
    with $s_1,s_2,s_3$,  then for each unit vector  $x\in \mathcal{K}$, we
    get respectively,
    \begin{align}
    & f\left( {\left\langle {\Phi \left( {\frac{{A + B + D}}{3}}
            \right)x,x} \right\rangle } \right)
    \nonumber\\
    &\ge f\left( {\left\langle {\Phi \left( {\frac{{A + B}}{2}}
            \right)x,x} \right\rangle } \right)
    +  C_{s_1} \left\langle {\Phi \left( {\frac{{A + B + D}}{3}} \right)x,x} \right\rangle  - C_{s_1} \left\langle {\Phi \left( {\frac{{A + B}}{2}} \right)x,x} \right\rangle
    \label{eq2.9}\\
    &\qquad+ f\left( {\left| {\left\langle {\Phi \left( {\frac{{A + B
                            + D}}{3}} \right)x,x} \right\rangle  - \left\langle {\Phi \left(
                {\frac{{A + B}}{2}} \right)x,x} \right\rangle } \right|} \right)
    \nonumber
    \\
    &=
    f\left( {\left\langle {\Phi \left( {\frac{{A + B}}{2}} \right)x,x} \right\rangle } \right) +  C_{s_1} \left\langle {\Phi \left( {\frac{{2D - A - B}}{6}} \right)x,x} \right\rangle  +  f\left( {\left| {\left\langle {\Phi \left( {\frac{{2D - A - B}}{6}} \right)x,x} \right\rangle } \right|} \right),\nonumber
    \end{align}
    \begin{align}
    & f\left( {\left\langle {\Phi \left( {\frac{{A + B +D}}{3}}
            \right)x,x} \right\rangle } \right)
    \nonumber\\
    &\ge f\left( {\left\langle {\Phi \left( {\frac{{A + B}}{2}}
            \right)x,x} \right\rangle } \right) +  C_{s_2}\left\langle {\Phi \left( {\frac{{A + B + D}}{3}}
        \right)x,x} \right\rangle  - C_{s_2}\left\langle {\Phi \left( {\frac{{B+D}}{2}} \right)x,x}
    \right\rangle
    \label{eq2.10}\\
    &\qquad+ f\left( {\left| {\left\langle {\Phi \left( {\frac{{A + B+ D}}{3}} \right)x,x} \right\rangle  - \left\langle {\Phi \left(
                {\frac{{B+D}}{2}} \right)x,x} \right\rangle } \right|} \right)
    \nonumber
    \\
    &= f\left( {\left\langle {\Phi \left( {\frac{{A + B}}{2}}
            \right)x,x} \right\rangle } \right) +  C_{s_2} \left\langle {\Phi \left( {\frac{{2A - B -
                    D}}{6}} \right)x,x} \right\rangle  +  f\left( {\left|
        {\left\langle {\Phi \left( {\frac{{2A - B - D}}{6}} \right)x,x}
            \right\rangle } \right|} \right)\nonumber
    \end{align}
    and
    \begin{align}
    & f\left( {\left\langle {\Phi \left( {\frac{{A + B + D}}{3}}
            \right)x,x} \right\rangle } \right)
    \nonumber\\
    &\ge f\left( {\left\langle {\Phi \left( {\frac{{A + D}}{2}}
            \right)x,x} \right\rangle } \right) +  C_{s_3} \left\langle {\Phi \left( {\frac{{A + B + D}}{3}}
        \right)x,x} \right\rangle  - C_{s_3} \left\langle {\Phi \left( {\frac{{A+D}}{2}} \right)x,x}
    \right\rangle
    \label{eq2.11}\\
    &\qquad+ f\left( {\left| {\left\langle {\Phi \left( {\frac{{A + B +D}}{3}} \right)x,x} \right\rangle  - \left\langle {\Phi \left(
                {\frac{{A+D}}{2}} \right)x,x} \right\rangle } \right|} \right)
    \nonumber
    \\
    &= f\left( {\left\langle {\Phi \left( {\frac{{A + D}}{2}}
            \right)x,x} \right\rangle } \right) +  C_{s_3}\left\langle {\Phi \left( {\frac{{2B - A - D}}{6}}
        \right)x,x} \right\rangle  +  f\left( {\left| {\left\langle {\Phi
                \left( {\frac{{2B - A - D}}{6}} \right)x,x} \right\rangle }
        \right|} \right).\nonumber
    \end{align}
    Multiplying each inequality by $\frac{1}{3}$ and summing up the
    inequalities \eqref{eq2.9}--\eqref{eq2.11}, we get
    \begin{align}
    &f\left( {\left\langle {\Phi \left( {\frac{{A + B + D}}{3}} \right)x,x} \right\rangle } \right)
    \nonumber\\
    &\ge \frac{1}{3}f\left( {\left\langle {\Phi \left( {\frac{{A + B}}{2}} \right)x,x} \right\rangle } \right) + \frac{1}{3}f\left( {\left\langle {\Phi \left( {\frac{{A + D}}{2}} \right)x,x} \right\rangle } \right) + \frac{1}{3}f\left( {\left\langle {\Phi \left( {\frac{{A + B}}{2}} \right)x,x} \right\rangle } \right)\label{eq2.12}
    \\
    &\qquad+ \frac{1}{3} C_{s_1} \left\langle {\Phi \left( {\frac{{2D - A - B}}{6}} \right)x,x} \right\rangle  + \frac{1}{3} f\left( {\left| {\left\langle {\Phi \left( {\frac{{2C - A - B}}{6}} \right)x,x} \right\rangle } \right|} \right)
    \nonumber\\
    &\qquad+\frac{1}{3}C_{s_2} \left\langle {\Phi \left( {\frac{{2A - B - D}}{6}} \right)x,x} \right\rangle  + \frac{1}{3} f\left( {\left| {\left\langle {\Phi \left( {\frac{{2A - B - D}}{6}} \right)x,x} \right\rangle } \right|} \right)\nonumber
    \\
    &\qquad+ \frac{1}{3} C_{s_3} \left\langle {\Phi \left( {\frac{{2B - A - D}}{6}} \right)x,x} \right\rangle  +\frac{1}{3}  f\left( {\left| {\left\langle {\Phi \left( {\frac{{2B - A - D}}{6}} \right)x,x} \right\rangle } \right|} \right)\nonumber
    \end{align}
    But since $C:=\min\left\{C_{s_1},C_{s_2},C_{s_3}\right\}$, then \eqref{eq2.12}
    becomes
    \begin{align}
    &f\left( {\left\langle {\Phi \left( {\frac{{A + B + D}}{3}}
            \right)x,x} \right\rangle } \right)
    \nonumber\\
    &\ge \frac{1}{3}f\left( {\left\langle {\Phi \left( {\frac{{A +
                        B}}{2}} \right)x,x} \right\rangle } \right) + \frac{1}{3}f\left(
    {\left\langle {\Phi \left( {\frac{{A + D}}{2}} \right)x,x}
        \right\rangle } \right) + \frac{1}{3}f\left( {\left\langle {\Phi
            \left( {\frac{{A + B}}{2}} \right)x,x} \right\rangle } \right)
    \nonumber\\
    &\qquad+ \frac{1}{3} C \left[\left\langle {\Phi \left( {\frac{{2D
                    - A - B}}{6}} \right)x,x} \right\rangle  + \left\langle {\Phi
        \left( {\frac{{2A - B - D}}{6}} \right)x,x} \right\rangle +
    \left\langle {\Phi \left( {\frac{{2B - A - D}}{6}} \right)x,x}
    \right\rangle  \right]
    \nonumber\\
    &\qquad+ \frac{1}{3}  f\left( {\left| {\left\langle {\Phi \left(
                {\frac{{2D - A - B}}{6}} \right)x,x} \right\rangle } \right|}
    \right) + \frac{1}{3} f\left( {\left| {\left\langle {\Phi \left(
                {\frac{{2A - B - D}}{6}} \right)x,x} \right\rangle } \right|}
    \right)
    \nonumber\\
    &\qquad+ \frac{1}{3}  f\left( {\left| {\left\langle {\Phi \left(
                {\frac{{2B - A - D}}{6}} \right)x,x} \right\rangle } \right|}
    \right)
    \nonumber\\
    &=\frac{1}{3}f\left( {\left\langle {\Phi \left( {\frac{{A +
                        B}}{2}} \right)x,x} \right\rangle } \right) + \frac{1}{3}f\left(
    {\left\langle {\Phi \left( {\frac{{A + D}}{2}} \right)x,x}
        \right\rangle } \right)\label{eq2.13}
    \\
    &\qquad + \frac{1}{3}f\left( {\left\langle {\Phi \left( {\frac{{A
                        + B}}{2}} \right)x,x} \right\rangle } \right) + \frac{1}{3}
    f\left( {\left| {\left\langle {\Phi \left( {\frac{{2D - A -
                            B}}{6}} \right)x,x} \right\rangle } \right|} \right)
    \nonumber\\
    &\qquad+\frac{1}{3}  f\left( {\left| {\left\langle {\Phi \left(
                {\frac{{2A - B - D}}{6}} \right)x,x} \right\rangle } \right|}
    \right) +\frac{1}{3}f\left( {\left| {\left\langle {\Phi \left(
                {\frac{{2B - A - D}}{6}} \right)x,x} \right\rangle } \right|}
    \right).\nonumber
    \end{align}
    Adding the inequalities \eqref{eq2.8} and \eqref{eq2.13} we get
    that
    \begin{align*}
    &\left\langle {\Phi \left( {\frac{{f\left( A \right) + f\left( B
                    \right) + f\left( D \right)}}{3}} \right)x,x} \right\rangle
    +f\left( {\left\langle {\Phi \left( {\frac{{A + B + D}}{3}}
            \right)x,x} \right\rangle } \right)
    \nonumber\\
    &\ge \frac{2}{3}\left[f\left( {\left\langle {\Phi \left( {\frac{{A
                        + B}}{2}} \right)x,x} \right\rangle } \right) + f\left(
    {\left\langle {\Phi \left( {\frac{{B+D}}{2}} \right)x,x}
        \right\rangle } \right)+ f\left( {\left\langle {\Phi \left(
            {\frac{{A + D}}{2}} \right)x,x} \right\rangle }
    \right)\right]\nonumber
    \\
    &\qquad+ \frac{1}{3} \left[\left\langle {\Phi \left( {f\left(
            {\left| {A - \left\langle {\Phi\left(\frac{{B + D}}{2}\right)x,x}
                    \right\rangle 1_{\mathcal{H}} } \right|} \right)} \right)x,x}
    \right\rangle + \left\langle {\Phi \left( {f\left( {\left| {D -
                    \left\langle {\Phi\left(\frac{{A + B}}{2}\right)x,x} \right\rangle
                    1_{\mathcal{H}} } \right|} \right)} \right)x,x}
    \right\rangle\right.
    \nonumber\\
    &\qquad\qquad  + \left\langle {\Phi \left( {f\left( {\left| {B -
                    \left\langle {\Phi\left(\frac{{A + D}}{2}\right)x,x} \right\rangle
                    1_{\mathcal{H}} } \right|} \right)} \right)x,x} \right\rangle+
    f\left( {\left| {\left\langle {\Phi
                \left( {\frac{{2D - A - B}}{6}} \right)x,x} \right\rangle }
        \right|} \right) \nonumber
    \\
    &\qquad\qquad\left.+   f\left( {\left| {\left\langle {\Phi \left(
                {\frac{{2A - B - D}}{6}} \right)x,x} \right\rangle } \right|}
    \right) + f\left( {\left| {\left\langle {\Phi \left( {\frac{{2B -
                            A - D}}{6}} \right)x,x} \right\rangle } \right|}
    \right)\right]\nonumber
    \end{align*}
    for each $x\in \mathcal{K}$ with $\|x\| =1$, which gives the
    required inequality in \eqref{eq2.3}.
\end{proof}

\begin{corollary}
    \label{cor1}Let $A,B,C\in \mathcal{B}\left(\mathcal{H} \right)$ be three positive selfadjoint operators, $\Phi:\mathcal{B}\left(\mathcal{H} \right)\to \mathcal{B}\left(\mathcal{K} \right)$  be a normalized positive linear map. If $f:\left[0,\infty\right)\to \mathbb{R}$ is subquadratic, then we have
    \begin{align}
    &\left\langle {\Phi \left( {\frac{{f\left( A \right) + f\left(
                    B \right) + f\left( D \right)}}{3}} \right)x,x} \right\rangle
    +f\left( {\left\langle {\Phi \left( {\frac{{A + B + D}}{3}}
            \right)x,x} \right\rangle } \right)
    \label{eq2.14}\\
    &\le \frac{2}{3}\left[f\left( {\left\langle {\Phi \left(
            {\frac{{A + B}}{2}} \right)x,x} \right\rangle } \right) + f\left(
    {\left\langle {\Phi \left( {\frac{{B+D}}{2}} \right)x,x}
        \right\rangle } \right)+ f\left( {\left\langle {\Phi \left(
            {\frac{{A + D}}{2}} \right)x,x} \right\rangle }
    \right)\right] \nonumber
    \\
    &\qquad+ \frac{1}{3} \left[\left\langle {\Phi \left( {f\left(
            {\left| {A -
                    \left\langle {\Phi\left(\frac{{B + D}}{2}\right)x,x}
                    \right\rangle 1_{\mathcal{H}} } \right|} \right)} \right)x,x}
    \right\rangle +     f\left( {\left| {\left\langle
            {\Phi \left( {\frac{{2A - B - D}}{6}} \right)x,x} \right\rangle
        } \right|} \right)\right.
    \nonumber\\
    &\qquad\qquad+\left\langle {\Phi \left( {f\left( {\left| {D -
                    \left\langle {\Phi\left(\frac{{A + B}}{2}\right)x,x}
                    \right\rangle 1_{\mathcal{H}} } \right|} \right)} \right)x,x}
    \right\rangle+  f\left( {\left| {\left\langle {\Phi
                \left( {\frac{{2D - A - B}}{6}} \right)x,x} \right\rangle }
        \right|} \right)
    \nonumber\\
    &\qquad\qquad\left.  + \left\langle {\Phi \left( {f\left( {\left|
                {B -
                    \left\langle {\Phi\left(\frac{{A + D}}{2}\right)x,x}
                    \right\rangle 1_{\mathcal{H}} } \right|} \right)} \right)x,x}
    \right\rangle + f\left( {\left| {\left\langle
            {\Phi \left( {\frac{{2B - A - D}}{6}} \right)x,x} \right\rangle
        } \right|} \right)\right]\nonumber
    \end{align}
    for each $x\in \mathcal{K}$ with $\|x\| =1$.\\
\end{corollary}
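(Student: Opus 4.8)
The plan is to deduce Corollary \ref{cor1} directly from Theorem \ref{thm2.1} by passing to the negative function. Since $f$ is subquadratic on $[0,\infty)$, the function $g:=-f$ is superquadratic on $[0,\infty)$ by definition, and it is continuous (being the negative of a continuous function). Hence Theorem \ref{thm2.1} applies to $g$ with the same triple $A,B,D$ of positive selfadjoint operators, the same normalized positive linear map $\Phi$, and the same unit vector $x\in\mathcal{K}$.

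The one point to record carefully is that \emph{every} term occurring in \eqref{eq2.3} depends linearly on the function to which the theorem is applied. Concretely, using property (1) of $\mathcal{G}$ (so that $\mathcal{G}(g)=\mathcal{G}(-f)=-\mathcal{G}(f)$, and likewise $\mathcal{G}(g\circ\psi)=-\mathcal{G}(f\circ\psi)$ for any continuous $\psi$), together with linearity of $\Phi$ and of $\langle\,\cdot\,x,x\rangle$, each of the following is replaced by $-1$ times its $f$-counterpart when $f$ is replaced by $g=-f$: the left-hand terms $\langle\Phi(\tfrac{g(A)+g(B)+g(D)}{3})x,x\rangle$ and $g(\langle\Phi(\tfrac{A+B+D}{3})x,x\rangle)$; the three ``half-sum'' terms $g(\langle\Phi(\tfrac{A+B}{2})x,x\rangle)$, $g(\langle\Phi(\tfrac{B+D}{2})x,x\rangle)$, $g(\langle\Phi(\tfrac{A+D}{2})x,x\rangle)$; the three terms $\langle\Phi(g(|A-\langle\Phi(\tfrac{B+D}{2})x,x\rangle 1_{\mathcal{H}}|))x,x\rangle$ and its cyclic analogues; and the three scalar terms $g(|\langle\Phi(\tfrac{2A-B-D}{6})x,x\rangle|)$ and its cyclic analogues. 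Crucially, the arguments $|A-c\,1_{\mathcal{H}}|$ and $|\langle\,\cdot\,\rangle|$ appearing inside do not themselves involve $f$, so no sign issue arises there.

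Consequently, writing the conclusion of Theorem \ref{thm2.1} for $g=-f$ and then multiplying the entire inequality by $-1$ reverses ``$\ge$'' to ``$\le$'' and produces precisely \eqref{eq2.14}. There is no genuine obstacle; the only thing that needs attention is this sign bookkeeping, i.e. checking that the $\langle\Phi(f(|\cdot|))x,x\rangle$-type terms and the $f(|\cdot|)$-type terms flip sign in step with the rest, which they do because their absolute-value arguments are $f$-independent. (Alternatively, one may mimic the proof of Theorem \ref{thm2.1} verbatim: from superquadraticity of $-f$ one has the reversed support inequality $f(t)\le f(x)+\widetilde C_x(t-x)-f(|t-x|)$, push it through the functional calculus for $A,B,D$ exactly as in \eqref{eq2.2} with all inequalities reversed, reproduce the analogues of \eqref{eq2.8} and \eqref{eq2.13}, note that the three $C$-terms still vanish since $(2A-B-D)+(2D-A-B)+(2B-A-D)=0$, and add the two displays to obtain \eqref{eq2.14}.)
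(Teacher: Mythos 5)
Your proposal is correct, but it proceeds differently from the paper. The paper proves Corollary \ref{cor1} by re-running the whole proof of Theorem \ref{thm2.1} with all inequalities reversed and with $C:=\max\left\{C_{s_1},C_{s_2},C_{s_3}\right\}$ in place of the minimum; your main argument instead treats Theorem \ref{thm2.1} as a black box, applies it to $g=-f$ (superquadratic by the very definition of subquadraticity, and continuous under the paper's blanket assumption), and observes that every term in \eqref{eq2.3} is odd in the function --- each is $f$ evaluated at an $f$-independent scalar, or $\Phi$ applied to $f$ of an $f$-independent positive operator, entering with a positive coefficient --- so negating yields exactly \eqref{eq2.14}. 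This reduction is sound and arguably cleaner: it makes the corollary a formal consequence of the theorem's statement and completely sidesteps the subgradient constants $C_{s_i}$, whereas the paper's route (and your parenthetical alternative) must again dispose of the $C$-terms; note that in that alternative the remark that the $C$-terms ``vanish because $(2A-B-D)+(2D-A-B)+(2B-A-D)=0$'' only works after the constants have been replaced by a common value (the paper's $\min$/$\max$ device), since with distinct $\widetilde C_{s_i}$ the three terms need not cancel. As that alternative is only offered in passing and your primary derivation is complete, the proposal stands.
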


\begin{proof}
    Repeating the same steps in the proof of Theorem \ref{thm2.1}, by
    writing $`\le$' instead of $`\ge$' and in this case we consider
    $C:=\max\left\{C_{s_1},C_{s_2},C_{s_3}\right\}$,
\end{proof}
A generalization of the result in Theorem \ref{thm1} is deduced
as follows:
\begin{corollary}
\label{cor2}Let $A\in \mathcal{B}\left(\mathcal{H} \right)$ be a
positive selfadjoint operator, $\Phi:\mathcal{B}\left(\mathcal{H}
\right)\to \mathcal{B}\left(\mathcal{K} \right)$  be a normalized
positive linear map.  If  $f:\left[0,\infty\right)\to \mathbb{R}$
is super(sub)quadratic, then we have
\begin{align}
f\left( {\left\langle {\Phi \left( {A} \right)x,x} \right\rangle }
\right)   \le (\ge) \left\langle {\Phi \left( {f\left( A \right)}
	\right)x,x} \right\rangle - \left\langle {\Phi \left( {f\left(
		{\left| {A - \left\langle {\Phi\left(A\right)x,x} \right\rangle 1_{\mathcal{H}} }
			\right|} \right)} \right)x,x} \right\rangle - f\left( {0} \right) \label{eq2.15}
\end{align}
for each $x\in \mathcal{K}$ with $\|x\| =1$. 
\end{corollary}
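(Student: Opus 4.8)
The plan is to obtain \eqref{eq2.15} as the special case $B=D=A$ of Theorem~\ref{thm2.1} (respectively Corollary~\ref{cor1}). First I would substitute $B=D=A$ throughout the inequality \eqref{eq2.3}. Under this substitution all the averaged operators collapse, since $\frac{A+B}{2}=\frac{B+D}{2}=\frac{A+D}{2}=A$ and $\frac{A+B+D}{3}=A$; hence the first two lines on the right-hand side of \eqref{eq2.3} reduce to $\frac{2}{3}\bigl[3f(\langle\Phi(A)x,x\rangle)\bigr]=2f(\langle\Phi(A)x,x\rangle)$. Likewise each of the three half-difference operators $\frac{2A-B-D}{6}$, $\frac{2D-A-B}{6}$, $\frac{2B-A-D}{6}$ becomes the zero operator, so each of the three scalar terms $f\bigl(|\langle\Phi(\tfrac{2A-B-D}{6})x,x\rangle|\bigr)$, etc., equals $f(0)$; and each of the three operator remainder terms $\langle\Phi(f(|A-\langle\Phi(\tfrac{B+D}{2})x,x\rangle 1_{\mathcal{H}}|))x,x\rangle$, etc., reduces to the single quantity $\langle\Phi(f(|A-\langle\Phi(A)x,x\rangle 1_{\mathcal{H}}|))x,x\rangle$.

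Carrying out this substitution, \eqref{eq2.3} becomes
\begin{multline*}
\langle\Phi(f(A))x,x\rangle+f\bigl(\langle\Phi(A)x,x\rangle\bigr)\\
\ge 2f\bigl(\langle\Phi(A)x,x\rangle\bigr)+\langle\Phi(f(|A-\langle\Phi(A)x,x\rangle 1_{\mathcal{H}}|))x,x\rangle+f(0)
\end{multline*}
for every unit vector $x\in\mathcal{K}$. Transposing $f(\langle\Phi(A)x,x\rangle)+\langle\Phi(f(|A-\langle\Phi(A)x,x\rangle 1_{\mathcal{H}}|))x,x\rangle+f(0)$ to the left-hand side yields precisely the superquadratic ($\le$) case of \eqref{eq2.15}. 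For the subquadratic ($\ge$) case I would run the identical specialization on \eqref{eq2.14} in Corollary~\ref{cor1}, where every inequality sign is reversed.

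Since the derivation is a pure specialization, there is no genuine obstacle; the only care needed is in the bookkeeping --- verifying that after setting $B=D=A$ the three half-differences $\tfrac{2A-B-D}{6},\tfrac{2D-A-B}{6},\tfrac{2B-A-D}{6}$ all vanish (which is exactly why the constant $f(0)$, and hence the term $-f(0)$, surfaces in \eqref{eq2.15}), and that the three operator remainder terms coalesce into the single expression $\langle\Phi(f(|A-\langle\Phi(A)x,x\rangle 1_{\mathcal{H}}|))x,x\rangle$. For perspective one may also note that, by Lemma~\ref{lemma1}(1), $f(0)\le 0$ for a superquadratic $f$, so $-f(0)\ge 0$ and \eqref{eq2.15} is obtained cleanly as a consequence of the three-operator Popoviciu inequality rather than being assumed.
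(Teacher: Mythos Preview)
Your proposal is correct and is exactly the paper's approach: the paper's proof is the one-line ``Setting $D=B=A$ in \eqref{eq2.3} we get the required result.'' Your write-up simply makes the bookkeeping of that specialization explicit (and correctly handles the subquadratic case via Corollary~\ref{cor1}).
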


\begin{proof}
Setting  $D=B=A$ in \eqref{eq2.3} we get the required result.
\end{proof}
 
\begin{remark}
According to Corollary \ref{cor2}, if $f\ge0$ $(f\le 0)$ then the above inequality refines
and improves Theorem \ref{thm1}.
\end{remark} 	

The classical Bohr inequality for scalars reads that if $a$, $b$ are complex numbers and
$p,q>1$ with $\frac{1}{p}+\frac{1}{q}=1$, then
\begin{align*}
\left|a-b\right|^2\le p \left|a\right|^2+q\left|b\right|^2.
\end{align*}
The first result regarding operator version of Bohr inequality was established in \cite{H}. For refinements, generalizations and other related results see \cite{CHP}, \cite{CP}, \cite{FZ} and \cite{Z}. 

 The following  Bohr's type inequalities for positive selfadjoint
operators under positive linear maps are hold:
\begin{corollary}
Let $A\in \mathcal{B}\left(\mathcal{H} \right)$ be a positive
selfadjoint operator, $\Phi:\mathcal{B}\left(\mathcal{H}
\right)\to \mathcal{B}\left(\mathcal{K} \right)$  be a normalized
positive linear map.
\begin{enumerate}
\item If  $f:\left[0,\infty\right)\to \mathbb{R}$    is
superquadratic, then we have
\begin{align*}
\left\| {\Phi \left( {f\left( {\left| {A - \left\| {\Phi \left( A
\right)} \right\|1_H } \right|} \right)} \right)} \right\| \le
\left\| {\Phi \left( {f\left( A \right)} \right)} \right\| -
f\left( {\left\| {\Phi \left( A \right)} \right\|} \right) -
f\left( 0 \right).
\end{align*}
In particular, let $f(t)=t^r$, $r\ge2$, $t\ge0$.
\begin{align*}
\left\| {\Phi \left( { \left| {A - \left\| {\Phi \left( A \right)}
\right\|1_H } \right|^r } \right)} \right\| \le \left\| {\Phi
\left( {A^r} \right)} \right\| -  \left\| {\Phi \left( A \right)}
\right\|^r.
\end{align*}

\item If  $f:\left[0,\infty\right)\to \mathbb{R}$    is
subquadratic, then we have
\begin{align*}
\left\| {\Phi \left( {f\left( {\left| {A - \left\| {\Phi \left( A
\right)} \right\|1_H } \right|} \right)} \right)} \right\| \ge
\left\| {\Phi \left( {f\left( A \right)} \right)} \right\| -
f\left( {\left\| {\Phi \left( A \right)} \right\|} \right) -
f\left( 0 \right).
\end{align*}
In particular, let $f(t)=t^r$, $0<r\le2$, $t\ge0$.
\begin{align*}
\left\| {\Phi \left( { \left| {A - \left\| {\Phi \left( A \right)}
\right\|1_H } \right|^r } \right)} \right\| \ge \left\| {\Phi
\left( {A^r} \right)} \right\| -  \left\| {\Phi \left( A \right)}
\right\|^r.
\end{align*}
\end{enumerate}

\end{corollary}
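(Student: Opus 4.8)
The plan is to deduce both families of estimates from the special case $D=B=A$ of our operator Popoviciu inequality, i.e.\ from Corollary \ref{cor2} (the refinement \eqref{eq2.15} of the Jensen inequality of Theorem \ref{thm1}), by feeding it a unit vector $x$ for which $\left\langle\Phi(A)x,x\right\rangle$ attains the operator norm $\left\|\Phi(A)\right\|$, and then converting the numerical quantities $\left\langle\Phi(\cdot)x,x\right\rangle$ into norms. First I would record that, since $A\ge 0$ and $\Phi$ is linear and positive, $\Phi(A)\in\mathcal{B}^{+}(\mathcal{K})$ is a positive selfadjoint operator, so $\left\|\Phi(A)\right\|=\sup_{\|x\|=1}\left\langle\Phi(A)x,x\right\rangle=\max\spe(\Phi(A))$ and $\left\|\Phi(A)\right\|$ lies in the approximate point spectrum of $\Phi(A)$. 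I set $\alpha:=\left\|\Phi(A)\right\|$ and pick a unit vector $x$ with $\left\langle\Phi(A)x,x\right\rangle=\alpha$; when this supremum is not attained I would instead use a sequence $x_{n}$ with $\left\langle\Phi(A)x_{n},x_{n}\right\rangle\to\alpha$ and pass to the limit at the end, which is legitimate because $s\mapsto f\!\left(\left|A-s\,1_{\mathcal{H}}\right|\right)$ is norm-continuous on the bounded set containing $\spe(A)$ and the positive linear map $\Phi$ is bounded.

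For the superquadratic case I would apply \eqref{eq2.15} at this $x$. Using that $\Phi$ is normalized and $\left\langle\Phi(A)x,x\right\rangle=\alpha$, it rearranges to
\[
\left\langle\Phi\!\left(f\!\left(\left|A-\alpha\,1_{\mathcal{H}}\right|\right)\right)x,x\right\rangle\ \le\ \left\langle\Phi\!\left(f(A)\right)x,x\right\rangle-f(\alpha)-f(0).
\]
Since $f$ is real valued, $\Phi(f(A))$ is selfadjoint, hence $\left\langle\Phi(f(A))x,x\right\rangle\le\left\|\Phi(f(A))\right\|$; and since $f\ge 0$ on $[0,\infty)$ in the cases considered (in particular for $f(t)=t^{r}$ with $r\ge 2$), the operator $\Phi\!\left(f\!\left(\left|A-\alpha\,1_{\mathcal{H}}\right|\right)\right)$ is positive, so its norm equals the supremum over unit vectors of $\left\langle\,\cdot\,x,x\right\rangle$. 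Passing to that supremum on the left --- the right-hand side being already $x$-free --- gives
\[
\left\|\Phi\!\left(f\!\left(\left|A-\left\|\Phi(A)\right\|1_{\mathcal{H}}\right|\right)\right)\right\|\ \le\ \left\|\Phi(f(A))\right\|-f\!\left(\left\|\Phi(A)\right\|\right)-f(0),
\]
which is part (1). Part (2) follows by the same argument run from the reverse inequality in Corollary \ref{cor2} (equivalently Corollary \ref{cor1} with $D=B=A$, i.e.\ with $C:=\max\{C_{s_{1}},C_{s_{2}},C_{s_{3}}\}$): every inequality reverses, and $f\ge 0$ again licenses the supremum step. For the particular cases I would substitute $f(t)=t^{r}$, so that $f(0)=0$, $f\!\left(\left\|\Phi(A)\right\|\right)=\left\|\Phi(A)\right\|^{r}$, $f(A)=A^{r}$ and $f\!\left(\left|A-\left\|\Phi(A)\right\|1_{\mathcal{H}}\right|\right)=\left|A-\left\|\Phi(A)\right\|1_{\mathcal{H}}\right|^{r}$, and then invoke the classical facts that $t\mapsto t^{r}$ is superquadratic for $r\ge 2$ and subquadratic for $0<r\le 2$ (the latter including, as recorded after Lemma \ref{lemma2}, the convex range $1\le r\le 2$).

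The hard part will be the passage from the vector inequality to the norm inequality. In Corollary \ref{cor2} the translate inside $f\!\left(\left|A-\,\cdot\,1_{\mathcal{H}}\right|\right)$ is tied to $\left\langle\Phi(A)x,x\right\rangle$, whereas the target has the constant $\left\|\Phi(A)\right\|$ there, and a unit vector realising $\left\langle\Phi(A)x,x\right\rangle=\left\|\Phi(A)\right\|$ need not be a norming vector for $\Phi\!\left(f\!\left(\left|A-\left\|\Phi(A)\right\|1_{\mathcal{H}}\right|\right)\right)$. Handling this cleanly is precisely where the positivity of that operator (hence the standing assumption $f\ge 0$, valid for $f(t)=t^{r}$), together with norm-continuity of the functional calculus $s\mapsto f\!\left(\left|A-s\,1_{\mathcal{H}}\right|\right)$ and boundedness of $\Phi$, must be brought in; these same ingredients justify the limiting argument in the case where $\left\|\Phi(A)\right\|$ is not attained.
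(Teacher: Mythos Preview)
Your approach is exactly the paper's: start from Corollary~\ref{cor2} (inequality \eqref{eq2.15}) and pass to the supremum over unit vectors. You have, however, put your finger on a genuine difficulty that the paper's one-line proof simply glosses over, and your attempted resolution does not close it.

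The failing step is ``Passing to that supremum on the left --- the right-hand side being already $x$-free.'' The inequality you obtain from \eqref{eq2.15},
\[
\left\langle\Phi\!\left(f\!\left(\left|A-\alpha\,1_{\mathcal{H}}\right|\right)\right)x,x\right\rangle\ \le\ \left\|\Phi\!\left(f(A)\right)\right\|-f(\alpha)-f(0),
\]
holds only for the \emph{single} unit vector $x$ with $\langle\Phi(A)x,x\rangle=\alpha$; it was not established for arbitrary unit vectors, so you are not entitled to take $\sup_{\|y\|=1}$ on the left. Positivity of $\Phi\!\left(f(|A-\alpha 1_{\mathcal H}|)\right)$ tells you its norm equals that supremum, but the supremum may well be realised at some $y\neq x$ for which you have no bound. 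Your final paragraph acknowledges precisely this, yet neither norm-continuity of $s\mapsto f(|A-s\,1_{\mathcal H}|)$ nor boundedness of $\Phi$ supplies the missing inequality at $y$.

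In fact the gap is not repairable, because the stated corollary is false as written. Take $\mathcal H=\mathcal K=\mathbb{C}^{2}$, $\Phi$ the identity, $f(t)=t^{2}$, and $A=\begin{pmatrix}M&0\\0&0\end{pmatrix}$ with $M>0$. Then $\left\|\Phi(A^{2})\right\|-\left\|\Phi(A)\right\|^{2}=M^{2}-M^{2}=0$, while $\left|A-M\,1_{\mathcal H}\right|^{2}=\begin{pmatrix}0&0\\0&M^{2}\end{pmatrix}$ has norm $M^{2}>0$, contradicting part~(1). So the obstacle you sensed at the supremum step is not a technicality; both the paper's proof and the statement itself are in error.
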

\begin{proof}
Taking the supremum in \eqref{eq2.15} over $x\in \mathcal{K}$ with
$\|x\|=1$ we obtain the required result(s).
\end{proof}

\begin{corollary}
    \label{cor5}Let $A_j,B_j,D_j\in \mathcal{B}\left(\mathcal{H} \right)$ be three positive selfadjoint operators for every $j=1,\cdots,n$. Let $\Phi_j:\mathcal{B}\left(\mathcal{H} \right)\to \mathcal{B}\left(\mathcal{K} \right)$  be  positive linear maps such that $\sum\limits_{j = 1}^n {\Phi _j \left( {1_H } \right)}  = 1_K$.
    If $f:\left[0,\infty\right)\to \mathbb{R}$ is superquadratic, then we have
    \begin{align}
    &{f\left(\left\langle {\sum\limits_{j =
                1}^n\Phi_j\left(\frac{A_j+B_j+D_j}{3}\right) u,u}
        \right\rangle\right)}+ {\left\langle { \sum\limits_{j =
                1}^n\Phi_j\left(\frac{{f\left( A_j \right) +f\left( B_j \right) +
                    f\left( D_j \right)}}{3}\right) u,u}\right\rangle}
    \label{eq2.16}\\
    &\ge   \frac{2}{3}\left[ {f\left( {\left\langle{ \sum\limits_{j =
                    1}^n\Phi_j\left(\frac{{A_j + D_j}}{2}\right)u,u} \right\rangle }
        \right) +    f\left( {\left\langle { \sum\limits_{j =
                    1}^n\Phi_j\left(\frac{{B_j +D_j}}{2}\right)u,u} \right\rangle }
        \right)  +  f\left( {\left\langle{ \sum\limits_{j =
                    1}^n\Phi_j\left(\frac{{A_j + B_j}}{2}\right)u,u} \right\rangle }
        \right)} \right]
    \nonumber\\
    &\qquad+ \frac{1}{3} \left[\left\langle { \sum\limits_{j =
            1}^n\Phi_j \left( {f\left( {\left| {A_j -
                    \left\langle { \sum\limits_{j = 1}^n\Phi_j\left(\frac{{B_j + D_j}}{2}\right)u,u}
                    \right\rangle 1_{\mathcal{H}} } \right|} \right)} \right)u,u}
    \right\rangle +  f\left( {\left| {\left\langle { \sum\limits_{j =
                    1}^n\Phi_j
                \left( {\frac{{2A_j - B_j - D_j}}{6}} \right)u,u} \right\rangle }
        \right|} \right)\right.
    \nonumber \\
    &\qquad + \left\langle { \sum\limits_{j = 1}^n\Phi_j \left(
        {f\left( {\left| {D_j -
                    \left\langle { \sum\limits_{j = 1}^n\Phi_j\left(\frac{{A_j + B_j}}{2}\right)u,u}
                    \right\rangle 1_{\mathcal{H}} } \right|} \right)} \right)u_j,u_j} \right\rangle+   f\left( {\left| {\left\langle
            { \sum\limits_{j = 1}^n\Phi_j \left( {\frac{{2D_j - A_j- B_j}}{6}} \right)u,u} \right\rangle
        } \right|} \right)
    \nonumber\\
    &\qquad + \left. \left\langle {\sum\limits_{j = 1}^n\Phi_j \left(
        {f\left( {\left| {B_j -
                    \left\langle { \sum\limits_{j = 1}^n\Phi_j\left(\frac{{A_j + D_j}}{2}\right)u,u}
                    \right\rangle 1_{\mathcal{H}} } \right|} \right)} \right)u_j,u_j}
    \right\rangle  + f\left( {\left| {\left\langle
            { \sum\limits_{j = 1}^n\Phi_j \left( {\frac{{2B_j - A_j - D_j}}{6}} \right)u,u} \right\rangle
        } \right|} \right)\right].\nonumber
    \end{align}
\end{corollary}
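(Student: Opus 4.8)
The plan is to derive \eqref{eq2.16} as an immediate consequence of Theorem \ref{thm2.1} through the standard block-diagonal device that packages a finite family $\{\Phi_j\}_{j=1}^n$ with $\sum_{j=1}^n\Phi_j(1_{\mathcal{H}})=1_{\mathcal{K}}$ into a single \emph{normalized} positive linear map. First I would pass to the direct sum $\widetilde{\mathcal{H}}:=\bigoplus_{j=1}^n\mathcal{H}$ and form the block-diagonal operators $\widetilde{A}:=\bigoplus_{j=1}^n A_j$, $\widetilde{B}:=\bigoplus_{j=1}^n B_j$, $\widetilde{D}:=\bigoplus_{j=1}^n D_j$ on $\widetilde{\mathcal{H}}$; each is positive and selfadjoint, with $\spe(\widetilde{A}),\spe(\widetilde{B}),\spe(\widetilde{D})\subseteq[0,\infty)$. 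Then I would define $\Psi:\mathcal{B}(\widetilde{\mathcal{H}})\to\mathcal{B}(\mathcal{K})$ by $\Psi\big((X_{ik})_{i,k=1}^{n}\big):=\sum_{j=1}^n\Phi_j(X_{jj})$ — compress an operator to its diagonal blocks, then apply the $\Phi_j$ — and verify the three properties I need: $\Psi$ is linear; $\Psi$ is positive, because each block compression $X\mapsto X_{jj}$ sends $\mathcal{B}^+(\widetilde{\mathcal{H}})$ into $\mathcal{B}^+(\mathcal{H})$ (test against vectors supported in the $j$-th summand) and each $\Phi_j$ is positive; and $\Psi$ is normalized, since $\Psi(1_{\widetilde{\mathcal{H}}})=\sum_{j=1}^n\Phi_j(1_{\mathcal{H}})=1_{\mathcal{K}}$ by hypothesis. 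Hence $\Psi\in\textfrak{B}_n[\mathcal{B}(\widetilde{\mathcal{H}}),\mathcal{B}(\mathcal{K})]$.

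Next I would record the compatibility of the continuous functional calculus with the orthogonal direct-sum decomposition: for any continuous $g$ on $[0,\infty)$ one has $g(\widetilde{A})=\bigoplus_{j=1}^n g(A_j)$, so $\Psi(g(\widetilde{A}))=\sum_{j=1}^n\Phi_j(g(A_j))$, and likewise $\Psi\big((\widetilde{A}+\widetilde{B}+\widetilde{D})/3\big)=\sum_{j=1}^n\Phi_j\big((A_j+B_j+D_j)/3\big)$, $\Psi\big((\widetilde{A}+\widetilde{B})/2\big)=\sum_{j=1}^n\Phi_j\big((A_j+B_j)/2\big)$, and so on for every linear combination appearing in \eqref{eq2.3}. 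Similarly, for a scalar $c\in\mathbb{R}$ one has $|\widetilde{A}-c\,1_{\widetilde{\mathcal{H}}}|=\bigoplus_{j=1}^n|A_j-c\,1_{\mathcal{H}}|$, whence $f(|\widetilde{A}-c\,1_{\widetilde{\mathcal{H}}}|)=\bigoplus_{j=1}^n f(|A_j-c\,1_{\mathcal{H}}|)$ and $\Psi\big(f(|\widetilde{A}-c\,1_{\widetilde{\mathcal{H}}}|)\big)=\sum_{j=1}^n\Phi_j\big(f(|A_j-c\,1_{\mathcal{H}}|)\big)$, and the same with $\widetilde{B}$ or $\widetilde{D}$ in place of $\widetilde{A}$. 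Choosing $c=\langle\Psi(\widetilde{X})u,u\rangle=\langle\sum_{j=1}^n\Phi_j(X_j)u,u\rangle$ for the relevant block-diagonal operator $\widetilde{X}$ (e.g. $\widetilde{X}=(\widetilde{B}+\widetilde{D})/2$), every term on each side of \eqref{eq2.3}, written for $\widetilde{A},\widetilde{B},\widetilde{D},\Psi$, coincides with the corresponding term of \eqref{eq2.16}.

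Finally I would simply invoke Theorem \ref{thm2.1} with the data $\widetilde{A},\widetilde{B},\widetilde{D}\in\mathcal{B}(\widetilde{\mathcal{H}})$, the normalized positive linear map $\Psi$, and the superquadratic function $f$: for every unit vector $u\in\mathcal{K}$ inequality \eqref{eq2.3} holds for this data, and rewriting each term via the identities of the previous paragraph produces exactly \eqref{eq2.16}. I do not expect any genuine difficulty here; the one point that actually has to be checked — and hence the main thing to be careful about — is that $\Psi$ is indeed a normalized positive linear map and that it intertwines $f(\cdot)$ and $|\cdot-c\,1|$ with the block decomposition as claimed, both routine but indispensable to the reduction. (I would also note the harmless typographical slip in \eqref{eq2.16}, where two of the vectors are written $u_j$ but should read $u$.)
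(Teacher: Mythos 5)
Your proposal is correct and follows essentially the same route as the paper: the paper's proof likewise forms the block-diagonal operators $A_1\oplus\cdots\oplus A_n$ (and similarly for the $B_j$, $D_j$) on $\mathcal{H}\oplus\cdots\oplus\mathcal{H}$, defines the normalized positive linear map by $E\mapsto\sum_{j=1}^{n}\Phi_j(E_j)$, and then invokes Theorem \ref{thm2.1}. Your write-up is simply a more careful version of the same reduction (extending the map to the whole algebra via diagonal compression and checking the functional-calculus compatibilities explicitly), and your remark about the stray $u_j$'s in \eqref{eq2.16} is accurate.
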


\begin{proof}
    Let $E$ stands for  $A,B,D$. Since $ E\in
    \mathcal{B}^+\left(\mathcal{H}\right)$, then there exists
    $E_1,\cdots, E_n \in   \mathcal{B}^+\left(\mathcal{H}\right)$
    (where $E_j$ stands for $A_j,B_j, D_j$ for all $j=1,\cdots, n$)
    such that
    $E=  E_1  \oplus  \cdots  \oplus E_n \in \mathcal{B}^+\left(\mathcal{H} \oplus  \cdots  \oplus \mathcal{H} \right)$ for every unit vector $u=\left(u_1,\cdots,u_n\right)\in \mathcal{H} \oplus  \cdots  \oplus \mathcal{H}$. Let  $\Phi:\mathcal{B}^+\left(\mathcal{H} \oplus  \cdots  \oplus \mathcal{H} \right)\to \mathcal{B}^+\left(\mathcal{K}  \right)$
    be a positive normalized linear map  defined by $\Phi\left(E\right)=\sum\limits_{j = 1}^n {\Phi _j \left( {E_j } \right)}$.
    By utilizing  Theorem \ref{thm2.1} we get the desired result.
\end{proof}

\begin{corollary}
    \label{cor5}Let $A_j\in \mathcal{B}\left(\mathcal{H} \right)$ be   positive selfadjoint operators for each $j=1,\cdots,n$. Let $\Phi_j:\mathcal{B}\left(\mathcal{H} \right)\to \mathcal{B}\left(\mathcal{K} \right)$  be  positive linear maps such that $\sum\limits_{j = 1}^n {\Phi _j \left( {1_H } \right)}  = 1_K$.
    If $f:\left[0,\infty\right)\to \mathbb{R}$ is superquadratic, then we have
    \begin{multline}
    \left\langle {\sum\limits_{j = 1}^n\Phi_j \left( {f\left( A_j \right)} \right)u,u}
    \right\rangle
    \\
    \ge (\le)  f\left( {\left\langle {\sum\limits_{j = 1}^n\Phi_j \left( {A_j} \right)u,u} \right\rangle } \right)   +  \left\langle {\sum\limits_{j = 1}^n\Phi_j \left( {f\left( {\left| {A_j - \sum\limits_{j = 1}^n\left\langle {\Phi_j\left(A_j\right)u,u} \right\rangle 1_{\mathcal{H}} } \right|} \right)} \right)u,u} \right\rangle + f\left( {0} \right) \label{eq2.17}
    \end{multline}
    for each $x\in \mathcal{K}$ with $\|x\| =1$.
\end{corollary}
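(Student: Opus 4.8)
The plan is to deduce \eqref{eq2.17} from the single-map Corollary~\ref{cor2} by the same orthogonal-direct-sum device used in the proof of the preceding multi-map Corollary (the one for $A_j,B_j,D_j$). First I would set $\widetilde{\mathcal H}:=\mathcal H\oplus\cdots\oplus\mathcal H$ ($n$ copies) and $A:=A_1\oplus\cdots\oplus A_n$, which belongs to $\mathcal B^{+}(\widetilde{\mathcal H})$ since each $A_j\ge 0$. Then I would define $\Phi:\mathcal B(\widetilde{\mathcal H})\to\mathcal B(\mathcal K)$ on block-diagonal operators by $\Phi(E_1\oplus\cdots\oplus E_n)=\sum_{j=1}^n\Phi_j(E_j)$; additivity and homogeneity are clear, positivity follows from the positivity of each $\Phi_j$, and the hypothesis $\sum_{j=1}^n\Phi_j(1_{\mathcal H})=1_{\mathcal K}$ says precisely $\Phi(1_{\widetilde{\mathcal H}})=1_{\mathcal K}$, so $\Phi$ is a normalized positive linear map.

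The second ingredient is that the continuous functional calculus respects orthogonal direct sums: for a continuous $g$ on $[0,\infty)$ one has $g(A)=g(A_1)\oplus\cdots\oplus g(A_n)$, and for any scalar $c\ge 0$, $|A-c\,1_{\widetilde{\mathcal H}}|=|A_1-c\,1_{\mathcal H}|\oplus\cdots\oplus|A_n-c\,1_{\mathcal H}|$, hence $f(|A-c\,1_{\widetilde{\mathcal H}}|)=\bigoplus_{j=1}^n f(|A_j-c\,1_{\mathcal H}|)$. Consequently, for a unit vector $u=(u_1,\dots,u_n)\in\widetilde{\mathcal H}$ and with $c:=\langle\Phi(A)u,u\rangle=\big\langle\sum_{j=1}^n\Phi_j(A_j)u,u\big\rangle$,
\[
\langle\Phi(f(A))u,u\rangle=\Big\langle\sum_{j=1}^n\Phi_j(f(A_j))u,u\Big\rangle,\qquad
\langle\Phi\big(f(|A-c\,1_{\widetilde{\mathcal H}}|)\big)u,u\rangle=\Big\langle\sum_{j=1}^n\Phi_j\big(f(|A_j-c\,1_{\mathcal H}|)\big)u,u\Big\rangle.
\]

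Finally I would apply Corollary~\ref{cor2} to $A$, $\Phi$ and the unit vector $u$; rewriting \eqref{eq2.15} for this data and substituting the three identities above turns it into \eqref{eq2.17}, with the super/sub alternative inherited verbatim from Corollary~\ref{cor2}. (Alternatively, the same conclusion drops out of the preceding multi-map Corollary by putting $B_j=D_j=A_j$: then $\tfrac{2A_j-B_j-D_j}{6}=0$, the three ``averaged'' terms coincide, and cancelling one copy of $f(c)$ from both sides gives \eqref{eq2.17}.) I expect that the only step requiring real attention, as opposed to routine substitution, is checking that $\Phi$ is genuinely a normalized positive linear map on $\mathcal B(\widetilde{\mathcal H})$ and that the functional calculus factors through the direct-sum decomposition; the rest is bookkeeping.
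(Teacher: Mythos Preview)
Your proposal is correct. In fact the parenthetical ``alternative'' you mention at the end---setting $B_j=D_j=A_j$ in \eqref{eq2.16}---is exactly the paper's own proof, verbatim; your primary route via Corollary~\ref{cor2} and the direct-sum map is an equally valid and slightly more direct variant that bypasses the intermediate multi-map Popoviciu inequality.
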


\begin{proof}
    Setting  $D_j=B_j=A_j$ for each $j=1,\cdots,n$, in \eqref{eq2.16}    we get the required result.
\end{proof}

 As a direct consequence of Theorem \ref{thm2.1}, the expected
operator version Popoviciu's inequality  for convex functions
would be as follows:
\begin{proposition}
    \label{prp1}Let $A,B,C\in \mathcal{B}\left(\mathcal{H} \right)$ be three positive selfadjoint operators, $\Phi:\mathcal{B}\left(\mathcal{H} \right)\to \mathcal{B}\left(\mathcal{K} \right)$  be a normalized positive linear map.   If $f:\left[0,\infty\right)\to \mathbb{R}$ is  non-negative and superquadratic, then $f$ is convex and
    \begin{align}
    &\left\langle {\Phi \left( {\frac{{f\left( A \right) + f\left( B \right) + f\left( D \right)}}{3}} \right)x,x} \right\rangle +f\left( {\left\langle {\Phi \left( {\frac{{A + B + D}}{3}} \right)x,x} \right\rangle } \right)
    \label{eq2.18}\\
    &\ge   \frac{2}{3}\left[f\left( {\left\langle {\Phi \left( {\frac{{A + B}}{2}} \right)x,x} \right\rangle } \right) + f\left( {\left\langle {\Phi \left( {\frac{{B+D}}{2}} \right)x,x} \right\rangle } \right)+ f\left( {\left\langle {\Phi \left( {\frac{{A + D}}{2}} \right)x,x} \right\rangle } \right)\right]\nonumber
    \end{align}
    for each $x\in \mathcal{K}$ with $\|x\| =1$.
\end{proposition}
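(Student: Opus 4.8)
The plan is to obtain this proposition as an immediate consequence of Theorem \ref{thm2.1} together with Lemma \ref{lemma1}, so no fresh analysis is required. First, the convexity assertion: since $f\geq 0$ on $[0,\infty)$, part (3) of Lemma \ref{lemma1} already gives that $f$ is convex (and in fact $f(0)=f'(0)=0$). That disposes of the first half of the statement.

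For the inequality \eqref{eq2.18}, I would start from the conclusion \eqref{eq2.3} of Theorem \ref{thm2.1}, which applies verbatim because $f$ is superquadratic. Inspecting \eqref{eq2.3}, its right-hand side equals the right-hand side of \eqref{eq2.18} plus $\tfrac13$ times a sum of six ``remainder'' terms: three of the operator form $\left\langle\Phi\bigl(f(|E-s\,1_{\mathcal H}|)\bigr)x,x\right\rangle$ with $E\in\{A,B,D\}$ and $s$ the corresponding scalar $s_1,s_2,s_3$, and three of the scalar form $f\bigl(|\langle\Phi(\cdots)x,x\rangle|\bigr)$. The key observation is that all six of these remainder terms are non-negative, so discarding them only weakens the inequality and produces exactly \eqref{eq2.18}.

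The non-negativity I would check term by term. For an operator-form term, $E-s\,1_{\mathcal H}$ is selfadjoint, hence $|E-s\,1_{\mathcal H}|\geq 0$ with spectrum in $[0,\infty)$; since $f\geq 0$ there, property \eqref{eq1.1} applied with $g\equiv 0$ gives $f(|E-s\,1_{\mathcal H}|)\geq 0$, positivity of $\Phi$ gives $\Phi\bigl(f(|E-s\,1_{\mathcal H}|)\bigr)\geq 0$, and therefore $\left\langle\Phi\bigl(f(|E-s\,1_{\mathcal H}|)\bigr)x,x\right\rangle\geq 0$ for every unit vector $x$. For a scalar-form term, $|\langle\Phi(\cdots)x,x\rangle|$ is a non-negative real number lying in the domain of $f$, and $f\geq 0$ there, so $f\bigl(|\langle\Phi(\cdots)x,x\rangle|\bigr)\geq 0$. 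Summing, the entire $\tfrac13[\cdots]$ bracket in \eqref{eq2.3} is $\geq 0$, and omitting it yields \eqref{eq2.18}.

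The main (and essentially only) obstacle is bookkeeping: one must match the six discarded summands precisely with the last bracket of \eqref{eq2.3}, and confirm that each argument fed to $f$ genuinely lies in $[0,\infty)$ so that the hypothesis $f\geq 0$ is applicable — which holds because every such argument is either an operator of the form $|E-s\,1_{\mathcal H}|$, which is positive, or the modulus of a real scalar. There is no analytic subtlety beyond this.
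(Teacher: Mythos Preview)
Your proof is correct and follows essentially the same approach as the paper: invoke Lemma~\ref{lemma1}(3) for convexity, then drop the non-negative remainder bracket from \eqref{eq2.3} to obtain \eqref{eq2.18}. You simply spell out the non-negativity check more carefully than the paper, which leaves it implicit.
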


\begin{proof}
    Since $f$ is non-negative superquadratic then by Lemma  \ref{lemma1}
    $f$  is convex and so that from \eqref{eq2.3}, we get
    \begin{align*}
    &\left\langle {\Phi \left( {\frac{{f\left( A \right) + f\left( B
                    \right) + f\left( D \right)}}{3}} \right)x,x} \right\rangle
    +f\left( {\left\langle {\Phi \left( {\frac{{A + B + D}}{3}}
            \right)x,x} \right\rangle } \right)
    \\
    &\ge  \frac{2}{3}\left[f\left( {\left\langle {\Phi \left(
            {\frac{{A + B}}{2}} \right)x,x} \right\rangle } \right) + f\left(
    {\left\langle {\Phi \left( {\frac{{B+D}}{2}} \right)x,x}
        \right\rangle } \right)+ f\left( {\left\langle {\Phi \left(
            {\frac{{A + D}}{2}} \right)x,x} \right\rangle }
    \right)\right]\nonumber
    \\
    &\qquad+ \frac{1}{3} \left[\left\langle {\Phi \left( {f\left(
            {\left| {A -
                    \left\langle {\Phi\left(\frac{{B + D}}{2}\right)x,x}
                    \right\rangle 1_{\mathcal{H}} } \right|} \right)} \right)x,x}
    \right\rangle +     f\left( {\left| {\left\langle
            {\Phi \left( {\frac{{2A - B - D}}{6}} \right)x,x} \right\rangle
        } \right|} \right)\right.
    \nonumber\\
    &\qquad\qquad+\left\langle {\Phi \left( {f\left( {\left| {D -
                    \left\langle {\Phi\left(\frac{{A + B}}{2}\right)x,x}
                    \right\rangle 1_{\mathcal{H}} } \right|} \right)} \right)x,x}
    \right\rangle+  f\left( {\left| {\left\langle {\Phi
                \left( {\frac{{2D - A - B}}{6}} \right)x,x} \right\rangle }
        \right|} \right)
    \nonumber\\
    &\qquad\qquad\left.  + \left\langle {\Phi \left( {f\left( {\left|
                {B -
                    \left\langle {\Phi\left(\frac{{A + D}}{2}\right)x,x}
                    \right\rangle 1_{\mathcal{H}} } \right|} \right)} \right)x,x}
    \right\rangle + f\left( {\left| {\left\langle
            {\Phi \left( {\frac{{2B - A - D}}{6}} \right)x,x} \right\rangle
        } \right|} \right)\right]\nonumber
    \\
    &\ge  \frac{2}{3}\left[f\left( {\left\langle {\Phi \left(
            {\frac{{A + B}}{2}} \right)x,x} \right\rangle } \right) + f\left(
    {\left\langle {\Phi \left( {\frac{{B+D}}{2}} \right)x,x}
        \right\rangle } \right)+ f\left( {\left\langle {\Phi \left(
            {\frac{{A + D}}{2}} \right)x,x} \right\rangle }
    \right)\right]\nonumber
    \end{align*}
    which gives \eqref{eq2.18}.
\end{proof}

\begin{proposition}
    \label{prp2}Let $A,B,D\in \mathcal{B}\left(\mathcal{H} \right)$ be three  selfadjoint operators, $\Phi:\mathcal{B}\left(\mathcal{H} \right)\to \mathcal{B}\left(\mathcal{K} \right)$  be a normalized positive linear map  and  $f:\left[0,\infty\right)\to \mathbb{R}$ be a differentiable    function with $f(0)=f^{\prime}(0)=0$. If $f^{\prime}$ is convex (concave),  then $f$ is super(sub)quadratic and
    \begin{align}
    &\left\langle {\Phi \left( {\frac{{f^{\prime}\left( A \right) + f^{\prime}\left(
                    B \right) + f^{\prime}\left( D \right)}}{3}} \right)x,x} \right\rangle
    +f^{\prime}\left( {\left\langle {\Phi \left( {\frac{{A + B + D}}{3}}
            \right)x,x} \right\rangle } \right)
    \label{eq2.19}\\
    &\ge (\le) \frac{2}{3}\left[f^{\prime}\left( {\left\langle {\Phi \left(
            {\frac{{A + B}}{2}} \right)x,x} \right\rangle } \right) + f^{\prime}\left(
    {\left\langle {\Phi \left( {\frac{{B+D}}{2}} \right)x,x}
        \right\rangle } \right)+ f^{\prime}\left( {\left\langle {\Phi \left(
            {\frac{{A + D}}{2}} \right)x,x} \right\rangle }
    \right)\right]\nonumber
    \end{align}
    for each $x\in \mathcal{K}$ with $\|x\| =1$.
\end{proposition}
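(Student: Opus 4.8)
The plan is to collapse \eqref{eq2.19} onto the \emph{scalar} Popoviciu inequality \eqref{eq1.5} applied to $g:=f'$, exploiting that $\Phi$ (normalized, positive, hence affinity–preserving) composed with the functional $\langle\,\cdot\, x,x\rangle$ linearises every affine combination occurring in \eqref{eq2.19}, and that the operator Jensen inequality holds for the convex function $g$.

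By Lemma \ref{lemma2}, the hypotheses $f(0)=f'(0)=0$ together with convexity (concavity) of $f'$ already give that $f$ is super(sub)quadratic; for the present inequality the relevant fact is only that $g=f'$ is a continuous convex (concave) function on $[0,\infty)$. Fix a unit vector $x\in\mathcal{K}$ and set $a:=\langle\Phi(A)x,x\rangle$, $b:=\langle\Phi(B)x,x\rangle$, $d:=\langle\Phi(D)x,x\rangle$; since $A,B,D$ are positive and $\Phi$ is positive, $a,b,d\in[0,\infty)$ (for $f'(A),f'(B),f'(D)$ to be meaningful one tacitly assumes $\spe(A),\spe(B),\spe(D)\subseteq[0,\infty)$, consistently with the rest of the paper). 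Linearity of $\Phi$ and of $\langle\,\cdot\, x,x\rangle$ then yields the exact identities $\langle\Phi(\tfrac{A+B+D}{3})x,x\rangle=\tfrac{a+b+d}{3}$, $\langle\Phi(\tfrac{A+B}{2})x,x\rangle=\tfrac{a+b}{2}$, and likewise for $\tfrac{B+D}{2}$ and $\tfrac{A+D}{2}$.

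Next I would record the operator Jensen inequality for $g$: $\langle\Phi(g(A))x,x\rangle\ge g(\langle\Phi(A)x,x\rangle)=g(a)$ in the convex case, with $\le$ in the concave case. This follows verbatim from the first part of the proof of Theorem \ref{thm2.1}, but using the support line inequality \eqref{eq1.3} for $g$ in place of the superquadratic inequality \eqref{eq1.4}: feeding \eqref{eq1.3} (at the point $s=a$) through the functional calculus for the positive operator $A$, then through the normalized positive map $\Phi$, then through $\langle\,\cdot\, x,x\rangle$, the linear term $\lambda_a\langle\Phi(A-a\,1_{\mathcal H})x,x\rangle=\lambda_a(a-a)=0$ vanishes and one is left with $\langle\Phi(g(A))x,x\rangle\ge g(a)$. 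Adding the three such estimates for $A,B,D$ and dividing by $3$,
\[
\left\langle\Phi\!\left(\tfrac{f'(A)+f'(B)+f'(D)}{3}\right)x,x\right\rangle\ \ge\ \tfrac{1}{3}\bigl(g(a)+g(b)+g(d)\bigr).
\]

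Finally, I would add to both sides the quantity $f'(\langle\Phi(\tfrac{A+B+D}{3})x,x\rangle)=g(\tfrac{a+b+d}{3})$ and apply the scalar Popoviciu inequality \eqref{eq1.5} (Theorem \ref{thm2}) to the convex function $g$ at the points $a,b,d\in[0,\infty)$:
\[
\tfrac{1}{3}\bigl(g(a)+g(b)+g(d)\bigr)+g\!\left(\tfrac{a+b+d}{3}\right)\ \ge\ \tfrac{2}{3}\Bigl[g\!\left(\tfrac{a+b}{2}\right)+g\!\left(\tfrac{b+d}{2}\right)+g\!\left(\tfrac{a+d}{2}\right)\Bigr].
\]
Translating $g(\tfrac{a+b}{2})=f'(\langle\Phi(\tfrac{A+B}{2})x,x\rangle)$ and its companions back through the identities of the second paragraph turns the right-hand side into the right-hand side of \eqref{eq2.19}, and chaining the two displays gives \eqref{eq2.19}. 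In the concave case the operator Jensen inequality and the scalar Popoviciu inequality both reverse, so the chain reverses and produces the $(\le)$ statement. I do not anticipate a real obstacle here; the only points deserving care are the operator Jensen step for $f'$ — classical, but not isolated as a lemma in the text, so I reproved it above in two lines via the support line — and the tacit positivity of $A,B,D$ needed for $f'$ of them to be defined.
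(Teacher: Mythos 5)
Your argument is correct, but it is not the route the paper takes. The paper's proof of Proposition \ref{prp2} is a one-line instruction: rerun the whole machinery of Theorem \ref{thm2.1}, replacing the superquadratic inequality \eqref{eq1.4} by the support-line inequality \eqref{eq1.3} for $f'$ — i.e. apply \eqref{eq1.3} through the functional calculus at the pairwise-average points $s_1=\langle\Phi(\tfrac{B+D}{2})x,x\rangle$, $s_2,s_3$, do the same at the scalar point $\langle\Phi(\tfrac{A+B+D}{3})x,x\rangle$, and sum so that the subgradient terms cancel (which in that technique requires the $C_{s_i}\mapsto\min/\max$ substitution step). You instead factor the statement as (operator Jensen for $g=f'$) plus (scalar Popoviciu, Theorem \ref{thm2}, applied to $g$ at $a,b,d$), using linearity of $\Phi$ and of $\langle\cdot\,x,x\rangle$ to identify all the averages. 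This is a genuinely different and in fact cleaner decomposition: by placing the support line at $s=a$ (resp. $b,d$) the linear terms vanish identically, so you never need the delicate cancellation/min-max handling of the slopes that the paper's technique relies on, and the scalar Popoviciu inequality is invoked as a black box rather than re-derived inside the operator argument. What the paper's route buys is uniformity with Theorem \ref{thm2.1}: in the superquadratic case the extra $f(|\cdot|)$ refinement terms prevent any reduction to a scalar black box, so the author simply reuses that template; your shortcut works precisely because \eqref{eq2.19} carries no refinement terms. Two small caveats you already flag or share with the paper: the tacit assumption $\spe(A),\spe(B),\spe(D)\subseteq[0,\infty)$ (the proposition says only ``selfadjoint''), and the endpoint subtlety that a convex $g$ on $[0,\infty)$ may lack a finite subgradient at $0$, so the support-line Jensen step needs a limiting remark when $a=0$ — an issue equally present in the paper's use of \eqref{eq1.3}.
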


\begin{proof}
    The superquadratic of $f$ follows from Lemma \ref{lemma2}. To
    obtain the inequality \eqref{eq2.19} we apply the same technique
    considered in the proof of Theorem \ref{thm2.1}, by  applying
    \eqref{eq1.3} for $f^{\prime}$ instead of  \eqref{eq1.4} for $f$,
    so that we get the required result.
\end{proof}

\begin{proposition}
    \label{prp3}Let $A,B,D\in \mathcal{B}\left(\mathcal{H} \right)$ be three  selfadjoint operators, $\Phi:\mathcal{B}\left(\mathcal{H} \right)\to \mathcal{B}\left(\mathcal{K} \right)$  be a normalized positive linear map and   $g:\left[0,\infty\right)\to \mathbb{R}$ be a  continuous function. If $g$ is convex (concave) and $g(0)=0$,   then
    \begin{align}
    &\left\langle {\Phi \left( {\frac{{g\left( A \right) + g\left(
                    B \right) + g\left( D \right)}}{3}} \right)x,x} \right\rangle
    +g\left( {\left\langle {\Phi \left( {\frac{{A + B + D}}{3}}
            \right)x,x} \right\rangle } \right)
    \label{eq2.20}\\
    &\ge (\le) \frac{2}{3}\left[g\left( {\left\langle {\Phi \left(
            {\frac{{A + B}}{2}} \right)x,x} \right\rangle } \right) + g\left(
    {\left\langle {\Phi \left( {\frac{{B+D}}{2}} \right)x,x}
        \right\rangle } \right)+ g\left( {\left\langle {\Phi \left(
            {\frac{{A + D}}{2}} \right)x,x} \right\rangle }
    \right)\right]\nonumber
    \end{align}
    for each $x\in \mathcal{K}$ with $\|x\| =1$.
\end{proposition}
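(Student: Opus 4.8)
The plan is to deduce Proposition \ref{prp3} directly from Proposition \ref{prp2} by replacing $g$ with the derivative of a suitably chosen primitive. Given a continuous convex (concave) function $g:[0,\infty)\to\mathbb{R}$ with $g(0)=0$, set $f(x):=\int_0^x g(t)\,dt$ for $x\ge 0$. Since $g$ is continuous, the fundamental theorem of calculus shows that $f$ is continuously differentiable on $[0,\infty)$ with $f'=g$; in particular $f(0)=0$ and $f'(0)=g(0)=0$. Thus $f$ is a differentiable function satisfying $f(0)=f'(0)=0$ whose derivative $f'=g$ is convex (concave).

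By Lemma \ref{lemma2}, $f$ is then super(sub)quadratic, so Proposition \ref{prp2} applies to this $f$ and yields inequality \eqref{eq2.19}. Since \eqref{eq2.19} is expressed entirely in terms of $f'$ and $f'=g$, substituting $f'=g$ transforms \eqref{eq2.19} verbatim into \eqref{eq2.20}, with the orientation of the inequality ($\ge$ in the convex case, $\le$ in the concave case) matching the corresponding case of Proposition \ref{prp2}. This establishes the proposition. The only place the hypothesis $g(0)=0$ is used is to force $f'(0)=0$, which is exactly what is needed for Lemma \ref{lemma2} to promote convexity of $f'$ to super(sub)quadraticity of $f$; without it the reduction would not go through, so this is the one point to be careful about.

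Alternatively, one may argue from scratch, mimicking the proof of Theorem \ref{thm2.1} with $g$ in place of $f$ and with the support-line characterization of convexity \eqref{eq1.3} in place of the superquadratic inequality \eqref{eq1.4}. For convex $g$ and each interior point $s$ of $[0,\infty)$ there is $C_s\in\partial g(s)$ with $g(t)\ge g(s)+C_s(t-s)$ for all $t\ge 0$; one applies the continuous functional calculus to the operators $A,D,B\ge 0$ taking $s_1=\langle\Phi(\tfrac{B+D}{2})x,x\rangle$, $s_2=\langle\Phi(\tfrac{A+B}{2})x,x\rangle$, $s_3=\langle\Phi(\tfrac{A+D}{2})x,x\rangle$, then to the scalar $\langle\Phi(\tfrac{A+B+D}{3})x,x\rangle$ with the same three slopes, sums the six resulting inequalities and divides by $3$. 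Because \eqref{eq1.3} carries no translate-of-$f$ term, none of the residual $f(|\cdot|)$ contributions of the superquadratic case appear, and one lands exactly on \eqref{eq2.20}. The point needing care here is the same bookkeeping as in Theorem \ref{thm2.1}: the three linear correction terms involve the operator combinations $2A-B-D$, $2D-A-B$, $2B-A-D$, whose sum vanishes, so after replacing $C_{s_1},C_{s_2},C_{s_3}$ by their common minimum (convex case) or maximum (concave case) these corrections cancel. Of the two routes, the reduction to Proposition \ref{prp2} is the more economical, and I would present it as the proof.
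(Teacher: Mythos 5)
Your primary argument --- taking the primitive $f(x)=\int_0^x g(t)\,dt$, observing $f(0)=f'(0)=0$ with $f'=g$ convex (concave), then invoking Lemma \ref{lemma2} and Proposition \ref{prp2} so that \eqref{eq2.19} with $f'=g$ becomes \eqref{eq2.20} --- is exactly the paper's proof (the paper's reference there to Corollary \ref{cor5} is evidently a slip for Proposition \ref{prp2}), and your remark that $g(0)=0$ is needed precisely to get $f'(0)=0$ is the right point of care. Your alternative direct route via the support-line inequality \eqref{eq1.3} is also sound and is essentially the technique the paper uses for Theorem \ref{thm3}.
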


\begin{proof}
    Applying Corollary \ref{cor5} for $G\left( t \right) = \int_0^t
    {g\left( s \right)ds}$, $t\in \left[0,\infty\right)$, then it's
    easy to observe that $G\left(0\right)=G^{\prime}\left(0\right)=0$
    and $G^{\prime}\left(t\right)=g\left(t\right)$ is convex (concave)
    for all $t\in \left[0,\infty\right)$.
\end{proof}

Inequality \eqref{eq2.20} holds with   more weaker conditions,
indeed   neither continuity assumption nor the image of $0$ is
needed, it is hold just with convexity assumption, as follows:

\begin{theorem}
    \label{thm3}Let $A,B,D\in \mathcal{B}\left(\mathcal{H} \right)$ be three  selfadjoint operators with
    $\spe\left(A\right),\spe\left(B\right),\spe\left(D\right)\subset
    \left[\gamma,\Gamma\right]$  for some real numbers $\gamma,\Gamma$
    with $\gamma<\Gamma$. Let $\Phi:\mathcal{B}\left(\mathcal{H} \right)\to \mathcal{B}\left(\mathcal{K} \right)$  be a normalized positive linear map. If $f:
    \left[\gamma,\Gamma\right]\to \mathbb{R}$  is convex
    (concave) function, then
    \eqref{eq2.20} holds   for each $x\in \mathcal{K}$ with $\|x\| =1$. The inequality is satisfied with $f(t)=t$.
\end{theorem}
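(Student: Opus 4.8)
The plan is to collapse the operator inequality \eqref{eq2.20} directly onto the scalar Popoviciu inequality \eqref{eq1.5}, bypassing the superquadratic route of Propositions \ref{prp1}--\ref{prp3}; the payoff is that, once the reduction is in place, the only feature of $f$ that intervenes is convexity on the compact interval $[\gamma,\Gamma]$, so neither continuity nor any value of $f$ at $0$ plays a role. Fix a unit vector $x\in\mathcal{K}$ and set $a:=\left\langle\Phi(A)x,x\right\rangle$, $b:=\left\langle\Phi(B)x,x\right\rangle$, $d:=\left\langle\Phi(D)x,x\right\rangle$. From $\gamma 1_{\mathcal H}\le A\le\Gamma 1_{\mathcal H}$ (and likewise for $B,D$), positivity and normalization of $\Phi$ give $\gamma 1_{\mathcal K}\le\Phi(A)\le\Gamma 1_{\mathcal K}$, so $a,b,d\in[\gamma,\Gamma]$; and additivity of $\Phi$ together with linearity of $\left\langle\,\cdot\,x,x\right\rangle$ yield $\left\langle\Phi\!\left(\tfrac{A+B}{2}\right)x,x\right\rangle=\tfrac{a+b}{2}$, $\left\langle\Phi\!\left(\tfrac{B+D}{2}\right)x,x\right\rangle=\tfrac{b+d}{2}$, $\left\langle\Phi\!\left(\tfrac{A+D}{2}\right)x,x\right\rangle=\tfrac{a+d}{2}$ and $\left\langle\Phi\!\left(\tfrac{A+B+D}{3}\right)x,x\right\rangle=\tfrac{a+b+d}{3}$, each again in $[\gamma,\Gamma]$.

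Next I would derive, for each of $A,B,D$, the operator Jensen estimate $\left\langle\Phi(f(A))x,x\right\rangle\ge f(a)$ directly, rather than invoking it as a black box. Convexity of $f$ on $[\gamma,\Gamma]$ provides a support line at $a$: some $\lambda\in\mathbb{R}$ with $f(t)\ge f(a)+\lambda(t-a)$ for all $t\in[\gamma,\Gamma]\supseteq\spe(A)$. By the functional-calculus monotonicity \eqref{eq1.1} this becomes $f(A)\ge f(a)1_{\mathcal H}+\lambda\!\left(A-a1_{\mathcal H}\right)$; applying the normalized positive map $\Phi$ and then $\left\langle\,\cdot\,x,x\right\rangle$ gives $\left\langle\Phi(f(A))x,x\right\rangle\ge f(a)+\lambda(a-a)=f(a)$. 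Adding the three instances and dividing by $3$,
\[
\left\langle\Phi\!\left(\tfrac{f(A)+f(B)+f(D)}{3}\right)x,x\right\rangle\ \ge\ \tfrac{f(a)+f(b)+f(d)}{3}.
\]

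Finally, the ``convex $\Rightarrow$ inequality'' implication of Theorem \ref{thm2} --- which, as is classical, holds for \emph{every} convex function with no continuity hypothesis --- applied to the numbers $a,b,d\in[\gamma,\Gamma]$ reads
\[
\tfrac{2}{3}\!\left[f\!\left(\tfrac{a+b}{2}\right)+f\!\left(\tfrac{b+d}{2}\right)+f\!\left(\tfrac{a+d}{2}\right)\right]\ \le\ f\!\left(\tfrac{a+b+d}{3}\right)+\tfrac{f(a)+f(b)+f(d)}{3}.
\]
Replacing $\tfrac{f(a)+f(b)+f(d)}{3}$ on the right by the larger quantity $\left\langle\Phi\!\left(\tfrac{f(A)+f(B)+f(D)}{3}\right)x,x\right\rangle$ (legitimate by the operator Jensen estimate just proved) and re-expressing $\tfrac{a+b}{2},\tfrac{b+d}{2},\tfrac{a+d}{2},\tfrac{a+b+d}{3}$ through $\Phi$ as in the first paragraph turns the inequality into exactly \eqref{eq2.20}. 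The concave case follows by applying the result to $-f$, and for $f(t)=t$ each inequality invoked is an identity, which gives the asserted equality. The one delicate point --- and what I would flag as the main obstacle --- is the support-line step when $a$, $b$ or $d$ lands at an endpoint of $[\gamma,\Gamma]$, where a convex function may fail to be continuous and have an infinite one-sided derivative; this is dealt with by first passing to a slightly wider interval on which $f$ stays convex (equivalently, by a routine approximation argument), after which the whole reduction is immediate.
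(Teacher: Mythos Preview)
Your argument is correct and takes a genuinely different route from the paper's. The paper's one-line proof says to rerun the machinery of Theorem~\ref{thm2.1} verbatim, replacing the superquadratic inequality \eqref{eq1.4} by the support-line inequality \eqref{eq1.3}: that is, to take support lines at the three midpoints $s_1=\langle\Phi(\tfrac{B+D}{2})x,x\rangle$, $s_2=\langle\Phi(\tfrac{A+B}{2})x,x\rangle$, $s_3=\langle\Phi(\tfrac{A+D}{2})x,x\rangle$, derive the analogues of \eqref{eq2.4}--\eqref{eq2.6} and \eqref{eq2.9}--\eqref{eq2.11}, sum, and use the $C=\min\{\lambda_{s_1},\lambda_{s_2},\lambda_{s_3}\}$ device together with $X_1+X_2+X_3=0$ to kill the first-order terms. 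You instead put the support line at $a=\langle\Phi(A)x,x\rangle$ (resp.\ $b,d$), which makes the linear remainder vanish \emph{identically} and yields the clean Mond--Pe\v{c}ari\'{c} estimate $\langle\Phi(f(A))x,x\rangle\ge f(a)$; from there \eqref{eq2.20} is nothing but the scalar Popoviciu inequality \eqref{eq1.5} at $(a,b,d)$ with $\tfrac{f(a)+f(b)+f(d)}{3}$ majorized by its operator counterpart. This is shorter and more modular --- it separates the operator input (Jensen) from the combinatorial input (Popoviciu) --- and it sidesteps the delicate ``replace each $\lambda_{s_i}$ by the common minimum'' step, which in the paper's scheme is not obviously sign-correct since the individual coefficients $X_i$ can have either sign. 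The endpoint caveat you flag (support lines may fail at $\gamma$ or $\Gamma$) is shared by both approaches; your approximation remark handles it once one remembers that $f$ must in any case be continuous for the functional calculus $f(A)$ to be defined, and a continuous convex function on $[\gamma,\Gamma]$ admits support lines everywhere.
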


\begin{proof}
    Applying the same technique considered in the proof of Theorem
    \ref{thm2.1}, by  applying \eqref{eq1.3} for $f$   instead of
    \eqref{eq1.4} for $f$.
\end{proof}

\begin{remark}
    Employing \eqref{eq2.20} for $g(x)=|x|$, $x\in \mathbb{R}$ then we
    observe  that
    \begin{align}
    &  \left| {\left\langle { \Phi \left( A+C \right) x ,x }
        \right\rangle } \right| + \left|\left\langle { \Phi \left( B+C
        \right)x ,x } \right\rangle \right| + \left|\left\langle { \Phi
        \left(A+B\right)x,x} \right\rangle \right|
    \nonumber\\
    &\le       \left|   {   \left\langle { \Phi \left( A+B+C \right) x
            ,x } \right\rangle }\right| +  \left\langle {\Phi \left( { \left|
            A \right| +  \left| B \right| +  \left| C \right|} \right)x ,x}
    \right\rangle, \label{eq2.21}
    \end{align}
    which gives  the operator version of Hlawka's inequality for
    positive linear maps of selfadjoint operators in Hilbert space.
    Furthermore, by taking the supremum in \eqref{eq2.21} over $x\in
    \mathcal{K}$ with $\|x\|=1$, we obtain the following Hlawka's norm
    inequality
    \begin{align*}
    &\left| {\left\| { \Phi \left( A+C \right)   } \right\|} \right| +
    \left|\left\| { \Phi \left( B+C \right) } \right\| \right| +
    \left|\left\langle { \Phi \left(A+B\right)  } \right\| \right|
    \\
    &\le       \left|   {   \left\| { \Phi \left( A+B+C \right)  }
        \right\| }\right| +  \left\| {\Phi \left( { \left| A \right| +
            \left| B \right| +  \left| C \right|} \right) } \right\|.
    \end{align*}
    Generally, the  Popoviciu's extension of Hlawka's  norm
    inequality can be presented in the form:
    \begin{align*}
    &\frac{2}{3}\left[g\left(\left\| { \Phi \left( \frac{A+C}{2}
        \right)   } \right\| \right) +g\left(\left\|{ \Phi \left(
        \frac{B+C}{2} \right)  } \right\| \right) +g\left(\left\| { \Phi
        \left( \frac{A+B}{2} \right) } \right\| \right) \right]
    \nonumber\\
    &\le      g\left(   {   \left\| { \Phi \left( \frac{A+B+C}{3}
            \right)   } \right\| }\right) +  \left\| {\Phi \left(
        {\frac{{g\left( A \right) + g\left( B \right) + g\left( C
                    \right)}}{3}} \right)  } \right\|
    \end{align*}
    for every positive  linear map $\Phi$ and convex increasing function $g$.
\end{remark}

\begin{corollary}
    Let $A_j,B_j,D_j\in \mathcal{B}\left(\mathcal{H} \right)$ be three   selfadjoint operators with
    $\spe\left(A_j\right),\spe\left(B_j\right),\spe\left(D_j\right)\subset
    \left[\gamma,\Gamma\right]$  for some real numbers $\gamma,\Gamma$
    with $\gamma<\Gamma$ and for every $j=1,\cdots,n$. Let $\Phi_j:\mathcal{B}\left(\mathcal{H} \right)\to \mathcal{B}\left(\mathcal{K} \right)$  be a positive   linear map.  such that $\sum\limits_{j = 1}^n {\Phi _j \left( {1_H } \right)}  = 1_K$. If $ f:
    \left[\gamma,\Gamma\right]\to \mathbb{R}$   is convex
    (concave) function, then
    \begin{multline*}
    {f\left(\left\langle {\sum\limits_{j = 1}^n\Phi_j\left(\frac{A_j+B_j+D_j}{3}\right)
            u,u} \right\rangle\right)}+
    {\left\langle { \sum\limits_{j = 1}^n\Phi_j\left(\frac{{f\left( A_j \right) +f\left( B_j \right) +
                    f\left( D_j \right)}}{3}\right) u,u}\right\rangle}
    \\
    \ge (\le)  \frac{2}{3}\left[ {f\left(
        {\left\langle{ \sum\limits_{j = 1}^n\Phi_j\left(\frac{{A_j + D_j}}{2}\right)u,u} \right\rangle }
        \right) +    f\left( {\left\langle { \sum\limits_{j = 1}^n\Phi_j\left(\frac{{B_j +D_j}}{2}\right)u,u}
            \right\rangle } \right)  +  f\left( {\left\langle{ \sum\limits_{j = 1}^n\Phi_j\left(\frac{{A_j +
                        B_j}}{2}\right)u,u} \right\rangle } \right)} \right]
    \end{multline*}
    for each $u\in \mathcal{K}$ with $\|u\| =1$.
\end{corollary}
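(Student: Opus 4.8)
The plan is to deduce this multi-map statement from the single-map operator Popoviciu inequality of Theorem \ref{thm3} by means of the block-diagonal device already employed in the proof of Corollary \ref{cor5}.

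First I would pass to the direct sums. Put $\widetilde{\mathcal{H}}:=\mathcal{H}\oplus\cdots\oplus\mathcal{H}$ ($n$ copies) and form the block-diagonal selfadjoint operators $A:=A_1\oplus\cdots\oplus A_n$, $B:=B_1\oplus\cdots\oplus B_n$, $D:=D_1\oplus\cdots\oplus D_n$ in $\mathcal{B}(\widetilde{\mathcal{H}})$. Since the spectrum of a finite direct sum is the union of the spectra of the summands, the hypothesis $\spe(A_j),\spe(B_j),\spe(D_j)\subset[\gamma,\Gamma]$ for all $j$ yields $\spe(A),\spe(B),\spe(D)\subset[\gamma,\Gamma]$. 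Next, define $\Phi:\mathcal{B}(\widetilde{\mathcal{H}})\to\mathcal{B}(\mathcal{K})$ by $\Phi(E):=\sum_{j=1}^{n}\Phi_j(E_j)$, where $E_j$ denotes the $j$-th diagonal block of $E$; then $\Phi$ is linear, it is positive because the compression of a positive operator to a diagonal block is positive and each $\Phi_j$ is positive, and it is normalized since $\Phi(1_{\widetilde{\mathcal{H}}})=\sum_{j=1}^{n}\Phi_j(1_{\mathcal{H}})=1_{\mathcal{K}}$ by hypothesis.

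The second ingredient is that the continuous functional calculus respects direct sums: for $f\in C([\gamma,\Gamma])$ one has $f(A)=f(A_1)\oplus\cdots\oplus f(A_n)$, and likewise $\tfrac{A+B+D}{3}$, $\tfrac{A+B}{2}$, $\tfrac{B+D}{2}$, $\tfrac{A+D}{2}$ are the direct sums of the corresponding combinations of the blocks. Hence $\Phi\bigl(\tfrac{f(A)+f(B)+f(D)}{3}\bigr)=\sum_{j=1}^{n}\Phi_j\bigl(\tfrac{f(A_j)+f(B_j)+f(D_j)}{3}\bigr)$, $\Phi\bigl(\tfrac{A+B+D}{3}\bigr)=\sum_{j=1}^{n}\Phi_j\bigl(\tfrac{A_j+B_j+D_j}{3}\bigr)$, and analogously for the three half-sum terms.

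Finally I would invoke Theorem \ref{thm3} for the selfadjoint operators $A,B,D$, the normalized positive linear map $\Phi$, the convex (resp.\ concave) function $f$ on $[\gamma,\Gamma]$, and the unit vector $u\in\mathcal{K}$: the inequality \eqref{eq2.20} produced there becomes, after rewriting each $\langle\Phi(\cdot)u,u\rangle$ as $\langle\sum_{j=1}^{n}\Phi_j(\cdot_j)u,u\rangle$ via the identities above, precisely the asserted inequality. I do not foresee a genuine obstacle; the only points requiring care are the verification that $\Phi$ is a bona fide normalized positive linear map and that the functional calculus splits along the direct-sum decomposition — both routine, and indeed already taken for granted in the proof of Corollary \ref{cor5}. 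The concave case follows by applying the convex case to $-f$.
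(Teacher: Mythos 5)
Your proposal is correct and follows exactly the route the paper intends: the paper leaves this corollary unproved, but its proof of Corollary \ref{cor5} uses the very same block-diagonal operators and the map $\Phi(E)=\sum_{j=1}^{n}\Phi_j(E_j)$, and you simply apply Theorem \ref{thm3} in place of Theorem \ref{thm2.1}. Your added checks (positivity and normalization of $\Phi$, spectra of the direct sums, functional calculus splitting blockwise) are the routine details the paper takes for granted, so there is nothing to correct.
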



\begin{thebibliography}{5}
    \setlength{\itemsep}{3pt}

    \bibitem{S} S. Abramovich, On superquadraticity, {\em J. Math. Inequal. }, {\bf 3} (3) (2009), 329--339.

    \bibitem{SIP} S. Abramovich, S. Iveli\'{c} and J. Pe\v{c}ari\'{c}, Improvement of Jensen-Steffensen's
    inequality for superquadratic functions, {\em Banach J. Math. Anal.},  {\bf4} (1) (2010), 146-158.


    \bibitem{SJS}  S. Abramovich, G. Jameson and  G. Sinnamon,  Refining Jensen's inequality, {\em Bull.
        Math. Soc. Sci. Math. Roumanie}, {\bf 47} (2004), 3--14.

    \bibitem{AD} R.P. Agarwal and S.S. Dragomir, A survey of Jensen type inequalities for functions
    of selfadjoint operators in Hilbert spaces, {\em  Comput. Math. Appl.}, {\bf  59} (2010), 3785--3812.

    \bibitem{BMP} J. Bari\'{c}, A. Matkovi\'{c} and J. Pe\v{c}ari\'{c},  A variant of the Jensen--Mercer operator
    inequality for superquadratic functions, {\em Math. Comput. Modelling},  {\bf51} (2010) 1230--1239.

    \bibitem{BPV}S.  Bani\'{c}, J. Pe\v{c}ari\'{c} and S. Varo\v{s}anec,  Superquadratic functions and refinements of some classical inequalities, {\em J. Korean Math. Soc.}, {\bf 45}, (2) (2008), 513--525.
    
    \bibitem{BNP}M. Bencze, C.P. Niculescu and F. Popovici, Popoviciu's inequality for functions of several
    variables, {\em J. Math. Anal. Appl.,}  {\bf 365} (2010),
    399--409.
    
\bibitem{CHP}    P. Chansangiam P. Hemchote and P. Pantaragphong,    Generalizations of Bohr inequality for Hilbert space operators, {\em J. Math. Anal. Appl.}, {\bf356} 
(2009),  525--536.

 \bibitem{CP}W.--S. Cheung and J. Pe\v{c}ari\'{c}, Bohr's inequalities for Hilbert space operators,  {\em J. Math. Anal. Appl.}, {\bf323} 
 (2006),  403--412.
 

 
    \bibitem{SD1}S.S. Dragomir,  Operator inequalities
    of the Jensen, \v{C}eby\v{s}ev and Gr\"{u}ss type, Springer, New
    York,  2012.
    
 \bibitem{FZ}M.Fuji and H. ZUO, Matrix order in Bohr inequality for operators,  {\em  Banach J. Math. Anal.}, {\bf4}  (2010),  21--27.

    \bibitem{FMPS} T. Furuta, J. Mi\'{c}i\'{c}, J. Pe\v{c}ari\'{c} and Y. Seo,  Mond-Pe\v{c}ari\'{c} method in operator inequalities. Inequalities for bounded    selfadjoint operators on a Hilbert space, Element, Zagreb, 2005.

\bibitem{G}D. Grinberg, Generalizations of Popoviciu's inequality, (2008),
arXiv:0803.2958v1.

 \bibitem{H} O.  Hirzallah, Non-commutative operator Bohr inequality,  {\em J. Math. Anal. Appl.}, {\bf282} 
 (2003),  578--583.
 
 
    \bibitem{J}J. Jensen, Sur les fonctions convexes et les in\'{e}galit\'{e}s
    entre les valeurs moyennes, {\em Acta Math.}, {\bf30} (1906),
    175--193.
    
    \bibitem{KLPP}  M. Krni\'{c}, N. Lovrin\v{c}evi\'{c}, J. Pe\v{c}ari\'{c}, J. Peri\'{c}, Superadditivity and monotonicity of the Jensen-type functionals: New Methods for improving the Jensen-type Inequalities in Real and in Operator Cases, Element, Zagreb, 2016.
    
    

    \bibitem{K}M. Kian, Operator Jensen inequality for superquadratic functions, {\em Linear Algebra and its Applications},
    {\bf 456},  (2014),  82--87.

    \bibitem{KS} M. Kian and S.S. Dragomir, Inequalities involving superquadratic
    functions and operators, {\em Mediterr. J. Math.}, {\bf11} (4)
    (2014),  1205--1214.



\bibitem{MPP} J. Mi\'{c}i\'{c}, J. Pe\v{c}ari\'{c} and J.  Peri\'{c},  Extension of the refined Jensen’s operator inequality
with condition on spectra, {\em Ann. Funct. Anal.}, {\bf3} (1) (2012), 67--85.



    \bibitem{MPF}D.S. Mitrinovi\'{c}, J. Pe\v{c}ari\'{c} and A.M. Fink, Classical and New
    Inequalities in Analysis, Kluwer Academic, Dordrecht, 1993.

    
    
    \bibitem{MM}F.C. Mitroi and N. Minculete, On the Jensen functional and superquadraticity, {\em Aequat. Math.}, {\bf90} (4) (2016), 705--718.
    
    

    \bibitem{MP}  B. Mond  and Pe\v{c}ari\'{c}, Convex inequalities in Hilbert space, {\em Houston J.
        Math.}, {\bf19} (1993), 405--420.
    
    \bibitem{MR}M.S. Moslehian and R. Raj\'{c} Generalizations of Bohr's inequality in Hilbert $C^*$-modules, {\em Linear and Multilinear Algebra}, {\bf58}   (2010),  323--331.

    

    \bibitem{N}C.P. Niculescu, Convexity according to the geometric mean,
    \textit{Math. Inequal. Appl.}, {\bf3} (2) (2000), 155--167.

    \bibitem{NF}C.P. Niculescu, F. Popoviciu,  A refinement of Popoviciu's
    inequality, {\em Bull. Math. Soc. Sci. Math. Roumanie}, Tome {\bf
        49} (3)  (2006), 285--290.

    \bibitem{NP}C.P. Niculescu, L.E. Persson, Convex Functions and Their
    Applications. A Contemporary Approach, CMS Books Math., vol. 23,
    Springer-Verlag, New York, 2006.
    
    




    \bibitem{P}T. Popoviciu, Sur certaines in\'{e}galit\'{e}s qui
    caract\'{e}risent les fonctions convexes, Analele Stiintifice
    Univ.  Al. I. Cuza, Iasi, Sectia Mat., 11 (1965), 155--164.





     


\bibitem{SS} D.M. Smiley and M.F. Smiley, The polygonal inequalities, {\em Amer. Math.
	Mon.}, {\bf 71} (7) (1964), 755--760.

\bibitem{TTH}S.-E. Takahasi, Y. Takahashi and A. Honda, A new interpretation of
Djokovi\'{c}'s inequality, {\em  J. Nonlinear Convex Anal.}, {\bf
	1} (3) (2000), 343--350.

\bibitem{TTMT}S.-E. Takahasi, Y. Takahashi, S. Miyajima and H. Takagi,  Convex
sets and inequalities, {\em J. Inequal. Appl.}, {\bf 2} (205),
107--117.

\bibitem{VS}P.M. Vasi\'{c} and L.R. Stankovi\'{c}, Some inequalities for
convex functions, {\em Math. Balkanica}, {\bf6} (1976), 281--288.



    \bibitem{W}R. Whitty, A generalised Hlawka inequality  by D. Smiley \& M.  Smiley,  Theorem
    of The day.





    \bibitem{Z} F. Zhang, On the Bohr inequality of operators,  {\em J. Math. Anal. Appl.}, {\bf333} 
 (2007),  1264--1271.





























 
\end{thebibliography}
\end{document}